\numberwithin{equation}{section}
\newtheorem{theorem}{Theorem}[section]
\newtheorem{lemma}[theorem]{Lemma}
\newtheorem{proposition}[theorem]{Proposition}
\newcommand{\ind}{\mathbf{1}}
\newcommand{\R}{\mathbb{R}}
\newcommand{\Z}{\mathbb{Z}}
\newcommand{\N}{\mathbb{N}}
\renewcommand{\tilde}{\widetilde}
\renewcommand{\hat}{\widehat}
\newcommand{\bP}{{\ensuremath{\mathbf P}} }
\newcommand{\bE}{{\ensuremath{\mathbf E}} }
\DeclareMathSymbol{\leqslant}{\mathalpha}{AMSa}{"36} 
\DeclareMathSymbol{\geqslant}{\mathalpha}{AMSa}{"3E} 
\DeclareMathSymbol{\eset}{\mathalpha}{AMSb}{"3F}     
\newcommand{\dd}{\,\text{\rm d}}             
\newcommand{\bbE}{{\ensuremath{\mathbb E}} }
\newcommand{\bbP}{{\ensuremath{\mathbb P}} }
\newcommand{\ga}{\alpha}
\newcommand{\gb}{\beta}
\newcommand{\gga}{\gamma}            
\newcommand{\gep}{\varepsilon}       
\newcommand{\gr}{\rho}
\newcommand{\gD}{\Delta}
\newcommand{\go}{\omega}
\newcommand{\gl}{\lambda}
\newcommand{\gs}{\sigma}
\def\captionfont@{\footnotesize}
\def\captionheadfont@{\scshape}
\long\def\@makecaption#1#2{%
  \vspace{2mm}
  \setbox\@tempboxa\vbox{\color@setgroup
    \advance\hsize-6pc\noindent
    \captionfont@\captionheadfont@#1\@xp\@ifnotempty\@xp
        {\@cdr#2\@nil}{.\captionfont@\upshape\enspace#2}%
    \unskip\kern-6pc\par
    \global\setbox\@ne\lastbox\color@endgroup}%
  \ifhbox\@ne 
    \setbox\@ne\hbox{\unhbox\@ne\unskip\unskip\unpenalty\unkern}%
  \fi
  \ifdim\wd\@tempboxa=\z@ 
    \setbox\@ne\hbox to\columnwidth{\hss\kern-6pc\box\@ne\hss}%
  \else 
    \setbox\@ne\vbox{\unvbox\@tempboxa\parskip\z@skip
        \noindent\unhbox\@ne\advance\hsize-6pc\par}%
\fi
  \ifnum\@tempcnta<64 
    \addvspace\abovecaptionskip
    \moveright 3pc\box\@ne
  \else 
    \moveright 3pc\box\@ne
    \nobreak
    \vskip\belowcaptionskip
  \fi
\relax
}
\def\writefig#1 #2 #3 {\rlap{\kern #1 truecm
\raise #2 truecm \hbox{#3}}}
\newcommand{\tf}{\textsc{f}}
\newcommand{\M}{\textsc{M}}
\begin{document}
\title[Disorder relevance for pinning models]{
Fractional moment bounds  and disorder relevance
\\
for pinning models}

\author{Bernard Derrida}
\address{Laboratoire de Physique Statistique, 
D\'epartement de Physique, 
\'Ecole Normale Sup\'erieure 
24, rue Lhomond, 
75231 Paris Cedex 05,  France }
\email{derrida\@@lps.ens.fr }
\author{Giambattista Giacomin}
\address{
  Universit{\'e} Paris  Diderot (Paris 7) and Laboratoire de Probabilit{\'e}s et Mod\`eles Al\'eatoires (CNRS U.M.R. 7599),
U.F.R.                Math\'ematiques, Case 7012,
                2 place Jussieu, 75251 Paris cedex 05, France
}
\email{giacomin\@@math.jussieu.fr}
\author{Hubert Lacoin}
\address{
  Universit{\'e} Paris  Diderot (Paris 7) and Laboratoire de Probabilit{\'e}s et Mod\`eles Al\'eatoires (CNRS U.M.R. 7599),
U.F.R.                Math\'ematiques, Case 7012,
                2 place Jussieu, 75251 Paris cedex 05, France
}
\email{hlacoin\@@gmail.com}
\author{Fabio Lucio Toninelli}
\address{
Laboratoire de Physique, ENS Lyon (CNRS U.M.R. 5672), 46 All\'ee d'Italie, 
69364 Lyon cedex 07, France
}
\email{fabio-lucio.toninelli@ens-lyon.fr}
\date{\today}

\begin{abstract}
  We study the critical point of directed pinning/wetting models with
  quenched disorder.  The distribution $K(\cdot)$ of the location of
  the first contact of the (free) polymer with the defect line is
  assumed to be of the form $K(n)=n^{-\ga-1}L(n)$, with $\ga\ge0$ and
  $L(\cdot)$ slowly varying. The model undergoes a (de)-localization
  phase transition: the free energy (per unit length) is zero in the
  delocalized phase and positive in the localized phase.  For
  $\ga<1/2$ disorder is irrelevant: quenched and
  annealed critical points coincide for small disorder, as well as
  quenched and annealed critical exponents \cite{cf:Ken,cf:T_cmp}.
  The same has been proven also for $\ga=1/2$, but under the
  assumption that $L(\cdot)$ diverges sufficiently fast at infinity,
  a hypothesis that is not satisfied in the $(1+1)$-dimensional
  wetting model considered in \cite{cf:FLNO,cf:DHV}, where $L(\cdot)$
  is asymptotically constant.  Here we prove that, if $1/2< \ga<1$ or
  $\ga >1$, then quenched and annealed critical points differ whenever
  disorder is present, and we give the scaling form of their
  difference for small disorder.  In agreement with the so-called
  Harris criterion, disorder is therefore relevant in this case. In
  the marginal case $\ga=1/2$, under the assumption that $L(\cdot)$
  vanishes sufficiently fast at infinity, we prove that the difference
  between quenched and annealed critical points, which is 
  smaller than any power of the disorder strength, is positive:
  disorder is {\sl marginally relevant}. Again, the case considered in
  \cite{cf:FLNO,cf:DHV} is out of our analysis and remains open.

  The results are achieved by setting the parameters of the model so
  that the annealed system is localized, but close to criticality, and
  by first considering a quenched system of size that does not exceed the
  correlation length of the annealed model.  In such a regime we can
  show that the expectation of the partition function raised to a
  suitably chosen power $\gamma\in (0,1)$ is small.  We then exploit
  such an information to prove that the expectation of the same
  fractional power of the partition function goes to zero with the
  size of the system, a fact that immediately entails that the
  quenched system is delocalized.
  \\
  \\
  2000 \textit{Mathematics Subject Classification: 82B44, 60K37, 60K05
  }
  \\
  \\
  \textit{Keywords: Pinning/Wetting Models, Disordered Models, Harris Criterion, Relevant
    Disorder, Renewal Theory}
\end{abstract}

\maketitle

\section{Introduction}

Pinning/wetting models with quenched disorder describe the random
interaction between a directed polymer and a one-dimensional {\sl
defect line}. In absence of interaction, a typical polymer
configuration is given by $\{(n,S_n)\}_{n\ge0}$, where
$\{S_n\}_{n\ge0}$ is a Markov Chain on some state space $\Sigma$ (for
instance, $\Sigma=\Z^d$ for $(1+d)$-dimensional directed polymers),
and the initial condition $S_0$ is some fixed element of $\Sigma$
which by convention we call $0$. The defect line, on the other hand,
is just $\{(n,0)\}_{n\ge0}$. The polymer-line interaction is
introduced as follows: each time $S_n=0$ ({\sl i.e.}, the polymer
touches the line at step $n$) the polymer gets an energy
reward/penalty $\epsilon_n$, which can be either positive or
negative. In the situation we consider here, the $\epsilon_n$'s are
independent and identically distributed (IID) random variables, with
positive or negative mean $h$ and variance $\beta^2\ge0$.

Up to now, we have made no assumption on the Markov Chain. The
physically most interesting case is the one where the distribution
$K(\cdot)$ of the first return time, call it $\tau_1$, of $S_n$ to $0$
has a power-law tail: $K(n):=\bP(\tau_1=n)\approx n^{-\alpha-1}$, with
$\alpha\ge0$. This framework allows to cover various situations
motivated by (bio)-physics: for instance, $(1+1)$-dimensional wetting
models \cite{cf:FLNO,cf:DHV} ($\ga=1/2$; in this case $S_n\ge0$, and
the line represents an impenetrable wall), pinning of
$(1+d)$-dimensional directed polymers on a columnar defect ($\ga=1/2$
if $d=1$ and $\ga=d/2-1$ if $d\ge2$), and the Poland-Scheraga model of
DNA denaturation (here, $\ga\simeq 1.15$ \cite{cf:dna}).  This is a
very active field of research, and not only from the point of view of
mathematical physics, see {\sl. e.g.}
\cite{cf:coluzzi} and references therein.  We refer to \cite[Ch. 1]{cf:GB} and references
therein for further discussion.

The model undergoes a localization/delocalization phase transition:
for any given value $\gb$ of the disorder strength, if the average
pinning intensity $h$ exceeds some critical value $h_c(\gb)$ then the
polymer typically stays tightly close to the defect line and the free
energy is positive.  On the contrary, for $h< h_c(\gb)$ the free
energy vanishes and the polymer has only few contacts with the defect:
entropic effects prevail. The annealed model, obtained by averaging the
Boltzmann weight with respect to disorder, is exactly solvable, and
near its critical point $h_c^{ann}(\gb)$ one finds that the annealed
free energy vanishes like $(h-h_c^{ann}(\gb))^{\max(1,1/\ga)}$
\cite{cf:Fisher}. In particular, the annealed phase transition is
first order for $\ga>1$ and second order for $\ga<1$, and it gets
smoother and smoother as $\ga$ approaches $0$.

A very natural and intriguing question is whether and how randomness
affects critical properties. The scenario suggested by the 
{\sl Harris criterion} \cite{cf:Harris} is the following: disorder
should be irrelevant for $\ga<1/2$, meaning that quenched critical
point and critical exponents should coincide with the annealed ones if
$\beta$ is small enough, and relevant for $\ga>1/2$: they should
differ for every $\gb>0$.  In the marginal case $\ga=1/2$, the Harris
criterion gives no prediction and there is no general consensus on
what to expect: renormalization-group considerations led Forgacs {\sl
  et al.}  \cite{cf:FLNO} to predict that disorder is irrelevant (see also the recent 
   \cite{cf:GN}),
while Derrida {\sl et al.} \cite{cf:DHV} concluded for marginal
relevance: quenched and annealed critical points should differ for
every $\gb>0$, even if the difference is zero at every perturbative
order in $\gb$.

\medskip

The mathematical understanding of these questions witnessed
remarkable progress recently, and we summarize here the state of the
art (prior to the the present contribution). 
\begin{enumerate}
\item A lot is now known on the  {\sl irrelevant-disorder regime}. 
In particular, it was proven in \cite{cf:Ken} (see
  \cite{cf:T_cmp} for an alternative proof) that quenched and annealed
  critical points and critical exponents coincide for $\beta$ small
  enough. Moreover, in \cite{cf:GT_irrel} a small-disorder expansion
  of the free energy, worked out in \cite{cf:FLNO}, was rigorously
  justified.

\item In the {\sl strong-disorder regime}, for which the Harris
  criterion makes no prediction, a few results were obtained recently.
  In particular, in \cite{cf:T_AAP} it was proven that for any given
  $\ga>0$ and, say, for Gaussian randomness, $h_c(\gb)\ne
  h_c^{ann}(\gb)$ for $\beta$ large enough, and the asymptotic
  behavior of $h_c(\gb)$ for $\gb\to\infty$ was computed. These
  results were obtained through upper bounds on fractional moments of
  the partition function.  Let us mention by the way that the
  fractional moment method allowed also to compute exactly
  \cite{cf:T_AAP} the quenched critical point of a {\sl diluted
    wetting model} (a model with a built-in strong-disorder
  limit); the same result was obtained in \cite{cf:BCT} via a rigorous
  implementation of renormalization-group ideas. Fractional moment
  methods  have proven to be useful also for other classes of disordered models
\cite{cf:AM,cf:ASFH,cf:BPP,cf:ED}. 

\item The {\sl relevant-disorder regime} is only partly understood. In
\cite{cf:GT_cmp} it was proven that the free-energy critical exponent
differs from the quenched one whenever $\gb>0$ and 
$\alpha>1/2$. 
However, the arguments in \cite{cf:GT_cmp} do not imply the critical point shift.
Nonetheless, 
the critical point shift issue  has been recently solved for a {\sl hierarchical
version} of the model, introduced in \cite{cf:DHV}. The hierarchical
model also depends on the parameter $\alpha$, and in \cite{cf:GLT} it
was shown that $h_c(\gb)-h_c^{ann}(\gb)\approx \beta^{2\ga/(2\ga-1)}$
for $\gb$ small (upper and lower bounds of the same order are
proven).

\item In the {\sl marginal case $\ga=1/2$} it was proven in
  \cite{cf:Ken,cf:T_cmp} that the difference $h_c(\gb)-h_c^{ann}(\gb)$
  vanishes faster than any power of $\gb$, for $\gb\to0$.  Before
  discussing lower bounds on this difference, one has to be more
  precise on the tail behavior of $K(n)$, the probability that the
  first return to zero of the Markov Chain $\{S_n\}_n$ occurs at $n$:
  if $K(n)=n^{-(1+1/2)}L(n)$ with $L(\cdot)$ slowly varying (say, a
  logarithm raised to a positive or negative power), then the two
  critical points coincide for $\beta$ small \cite{cf:Ken,cf:T_cmp} if
  $L(\cdot)$ diverges sufficiently fast at infinity so that
  \begin{equation}
    \label{eq:condL}
\sum_{n=1}^\infty\frac1{n L(n)^2}\, <\,\infty.    
  \end{equation}
  The case of
  the $(1+1)$-dimensional wetting model \cite{cf:DHV} corresponds
  however to the case where $L(\cdot)$ behaves like a constant at
  infinity, and the result just mentioned does not apply.

The case $\ga=1/2$ is open also for the hierarchical model mentioned
above.

\end{enumerate}

\medskip

In the present work we prove that if $\ga\in(1/2,1)$ or $\ga>1$ then
quenched and annealed critical points differ for every $\beta>0$, and
$h_c(\gb)-h_c^{ann}(\gb)\approx \gb^{2\ga/(2\ga-1)}$ for
$\gb\searrow0$ ({\sl cf.} Theorem \ref{th:a121} for a more precise
statement). 
In the case $\ga=1/2$, while we do not
prove that $h_c(\gb)\ne h_c^{ann}(\gb)$ in all cases in which condition
\eqref{eq:condL} fails, we do prove such a result if the function
$L(\cdot)$ vanishes sufficiently fast at infinity.  Of course,
$h_c(\gb)- h_c^{ann}(\gb)$ turns out to be exponentially small for
$\gb\searrow0$.

We wish to emphasize that, although the Harris criterion is expected to 
be applicable to a large variety of disordered models, rigorous results
are very rare: let us mention however
\cite{cf:CCFS,cf:vD}.

\medskip

Starting from next section, we will forget the full Markov structure 
of the polymer, and retain only the fact that the set of points of contact
with the defect line, $\tau:=\{n\ge 0:S_n=0\}$, is a renewal process
under the law $\bP$ of the Markov Chain.

\section{Model and main results}

Let $\tau:=\{\tau_0,\tau_1,\ldots\}$ be a renewal sequence started
from $\tau_0=0$ and with inter-arrival law $K(\cdot)$, {\sl i.e.,}
$\{\tau_i-\tau_{i-1}\}_{i\in \N:=\{1, 2, \ldots\}}$ are IID integer-valued random
variables with law $\bP(\tau_1=n)=K(n)$ for every $n\in\N$.   We assume that
$\sum_{n\in\N}K(n)=1$ (the renewal is recurrent) and that there exists
$\ga>0$ such that
\begin{equation}
  \label{eq:K}
  K(n)= \frac{L(n)}{n^{1+\alpha}}
\end{equation}
with $L(\cdot)$ a function that varies slowly at infinity, {\sl i.e.}, $L: (0, \infty) \to (0, \infty)$ is 
 measurable  and
such that $L(rx)/L(x)\to1$ when $x\to\infty$, for every $r>0$.  We
refer to \cite{cf:RegVar} for an extended treatment of slowly varying
functions, recalling just that examples of $L(x)$ include $(\log (1+x))^b$,
any $b \in \R$, and any (positive, measurable) function admitting
a positive limit at infinity (in this case we say that $L(\cdot)$ is {\sl trivial}).
Dwelling a bit more on nomenclature, $x \mapsto x^\gr L(x)$
is a {\sl regularly varying function of exponent } $\gr$,
so $K(\cdot)$ is just the restriction to the natural numbers of
a regularly varying function of exponent $-(1+\ga)$.

\medskip

We let $\gb\ge0$, $h\in\R$ and
$\go:=\{\go_n\}_{n\ge1}$ be a sequence of IID centered random variables
with unit variance and finite exponential moments. The law of $\go$ is 
denoted by $\bbP$ and the corresponding expectation  by $\bbE$.

For $a,b\in\{0,1,\ldots\}$ with $a\le b$ we let $Z_{a,b,\omega}$ be the
partition function for the system on the interval
$\{a,a+1,\ldots,b\}$, with zero boundary conditions at both endpoints:
\begin{equation}
\label{eq:Z}
Z_{a,b,\omega}=\bE\left(\left.e^{\sum_{n=a+1}^b(\beta\go_n+h)
\ind_{\{n\in\tau\}}}\ind_{\{b\in\tau\}}\right|a\in\tau\right),
\end{equation}
where $\bE$ denotes expectation with respect to the law $\bP$ of the
renewal.  One may rewrite $Z_{a,b,\omega}$ more explicitly as
\begin{equation}
  \label{eq:Z+}
  Z_{a,b,\omega}=\sum_{\ell=1}^{b-a}\sum_{i_0=a<i_1<\ldots<i_\ell=b}
\prod_{j=1}^\ell K(i_j-i_{j-1})e^{h\ell+\beta\sum_{j=1}^\ell \omega_{i_j}},
\end{equation}
with the convention that $Z_{a,a,\omega}=1$. 
Notice that, when writing  $n \in \tau$, we are interpreting $\tau$ as a subset
of $\N \cup \{0\}$ rather than as a sequence of random variables.
 We will write for
simplicity $Z_{N,\go}$ for $Z_{0,N,\go}$ (and in that case the
conditioning on $0\in\tau$ in \eqref{eq:Z} is superfluous since
$\tau_0=0$).  In absence of disorder ($\gb=0$), it is convenient to
use the notation
\begin{equation}
\label{eq:Zpure}
  Z_N(h):=\bE\left(e^{h\sum_{n=1}^N\ind_{\{n\in\tau\}}}
\ind_{\{N\in\tau\}}\right)=\bE\left(e^{h|\tau\cap\{1,\ldots,N\}|}\ind_{\{N\in\tau\}}
\right), 
\end{equation}
for the partition function.

We mention that the recurrence assumption $\sum_{n\in\N}K(n)=1$
entails no loss of generality, since one can always reduce to this
situation via a redefinition of $h$ ({\sl cf.} \cite[Ch.~1]{cf:GB}).

\medskip
As usual the {\sl quenched free energy} is defined as
\begin{equation}
\label{eq:F}
  \tf(\beta,h)=\lim_{N\to\infty}\frac1N \log Z_{N,\go}.
\end{equation}
It is well known ({\sl cf.} for instance \cite[Ch. 4]{cf:GB}) that the limit
\eqref{eq:F} exists $\bbP(\dd\go)$-almost surely and in $\mathbb
L^1(\bbP)$, and that it is almost-surely independent of $\go$.
Another well-established fact is that $\tf(\beta,h)\ge0$, which
immediately follows from $Z_{N,\go}\ge K(N)\exp(\gb \go_N+h)$.  This
allows to define, for a given $\gb\ge0$, the critical point $h_c(\gb)$
as
\begin{equation}
  \label{eq:hc}
  h_c(\gb):=\sup\{h\in\R:\;\tf(\beta,h)=0\}.
\end{equation}
It is well known that $h>h_c(\gb)$ corresponds to the {\sl localized
  phase} where typically $\tau$ occupies a non-zero fraction of
$\{1,\ldots,N\}$ while, for $h<h_c(\gb)$, $\tau\cap\{1,\ldots,N\}$
contains with large probability at most $O(\log N)$ points
\cite{cf:GTdeloc}. We refer to \cite[Ch.s 7 and 8]{cf:GB} for further
literature and discussion on this point.

\medskip

In analogy with the quenched free energy, the {\sl annealed free
  energy} is defined by
\begin{equation}
\label{eq:Fann}
  \tf^{ann}(\gb,h)\, :=\, \lim_{N\to\infty}\frac1N\log \bbE Z_{N,\go}=
\tf(0,h+\log\M(\gb)),
\end{equation}
with 
\begin{equation}
  \label{eq:M}
  \M(\gb):= \bbE(e^{\beta\go_1}).
\end{equation}
We see therefore that the annealed free energy is just the free energy
of the pure model ($\beta=0$) with a different value of $h$.  The pure
model is exactly solvable \cite{cf:Fisher}, and we collect here a few
facts we will need in the course of the paper.

\begin{theorem}\cite[Th. 2.1]{cf:GB} 
\label{th:pure} For the pure model $h_c(0)=0$.
  Moreover, there exists a slowly varying function $\hat L(\cdot)$
  such that for $h>0$ one has
  \begin{equation}
    \label{eq:Fpure}
    \tf(0,h)=h^{1/\min(1,\ga)}\hat L(1/h).
  \end{equation}
In particular,
\begin{enumerate}
\item if $\bE(\tau_1)=\sum_{n\in\N}n\,K(n)<\infty$ (for instance, if
  $\ga>1$) then $\hat L(1/h)\stackrel{h\searrow0}\sim 1/\bE(\tau_1)$.
\item if $\ga\in(0,1)$, then $\hat L(1/h)=C_\ga h^{-1/\ga}R_\ga(h)$ where
$C_\ga$ is an explicit constant and $R_\ga(\cdot)$ is the 
function, unique up to asymptotic equivalence,  that satisfies
$R_\ga ( b^\ga L(1/b)) \stackrel{b \searrow 0}\sim b$.
\end{enumerate}
\end{theorem}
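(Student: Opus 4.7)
The plan is to exploit the renewal structure of the pure model through the Laplace transform
$$\Phi(b) \, :=\, \bE\bigl(e^{-b\tau_1}\bigr)\, =\, \sum_{n\ge 1} K(n)\, e^{-bn}, \qquad b \ge 0,$$
which is continuous and strictly decreasing from $\Phi(0)=1$ (by recurrence, $\sum_n K(n)=1$) down to $0$. Summing \eqref{eq:Z+} at $\gb=0$ over $N\ge 0$ and using independence of the increments of $\tau$, one obtains
$$\sum_{N\ge 0} e^{-bN} Z_N(h) \,=\, \frac{1}{1 - e^h\Phi(b)},$$
a series of non-negative terms, hence finite precisely when $e^h \Phi(b) < 1$. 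Since $\tf(0,h)$ is the exponential growth rate of $N\mapsto Z_N(h)$, a root-test comparison identifies $F := \tf(0,h)$ as the unique non-negative solution of $\Phi(F) = e^{-h}$: for $h\le 0$ only $F=0$ qualifies, while for $h>0$ a unique strictly positive $F$ exists. This simultaneously gives $h_c(0) = 0$ and reduces the small-$h$ analysis of $F$ to the small-$F$ behavior of $1 - \Phi(F) = 1 - e^{-h} \sim h$.

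Two regimes must be distinguished, according to the integrability of $\tau_1$. If $\bE(\tau_1) < \infty$ (in particular for $\ga>1$), writing $1-\Phi(F) = \sum_n K(n)(1-e^{-Fn})$ and applying dominated convergence with dominant $nK(n)$ yields $1-\Phi(F) \sim F\,\bE(\tau_1)$ as $F\searrow 0$, hence $F \sim h/\bE(\tau_1)$; this is case (1), with $\hat L(1/h) \to 1/\bE(\tau_1)$. If $\ga\in(0,1)$, the first moment diverges and I would invoke the standard Tauberian theorem for Laplace transforms of regularly varying tails (see \cite{cf:RegVar}): since $\bP(\tau_1>n)\sim L(n)/(\ga n^\ga)$ is regularly varying of index $-\ga$,
$$1-\Phi(F) \,\sim\, \frac{\Gamma(1-\ga)}{\ga}\, F^\ga L(1/F), \qquad F\searrow 0.$$
Equating the right-hand side with $h$ and inverting through the asymptotic inverse $R_\ga$ of $b\mapsto b^\ga L(1/b)$, together with the identity $R_\ga(\lambda x)\sim \lambda^{1/\ga}R_\ga(x)$, yields $F \sim C_\ga R_\ga(h)$ with $C_\ga = (\ga/\Gamma(1-\ga))^{1/\ga}$. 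Since $R_\ga$ is regularly varying of index $1/\ga$ at $0$, one obtains the representation $F = h^{1/\ga}\hat L(1/h)$ with $\hat L$ slowly varying, which is case (2). Combining the two regimes gives the uniform form $\tf(0,h) = h^{1/\min(1,\ga)}\hat L(1/h)$.

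The main technical point is the Tauberian inversion in case (2): one must verify both the existence and uniqueness (up to asymptotic equivalence) of $R_\ga$ and the fact that its composition with the slowly varying $\Gamma(1-\ga)/\ga$ factor still produces a slowly varying $\hat L$. Both are standard consequences of the Karamata theory of regularly varying functions, and the monotonicity of $F$ in $h$ coming from the implicit equation $\Phi(F)=e^{-h}$ excludes oscillatory corrections to the asymptotics. The borderline case $\ga=1$ with infinite mean (not covered by (1) or (2)) can be handled by the same scheme, replacing $\bE(\tau_1)$ by the slowly varying truncated mean $m(F):=\sum_{n\le 1/F} n K(n)$, so $1-\Phi(F)\sim F\, m(F)$ and $F\sim h/m(1/h)$; the rest is bookkeeping on slowly varying functions.
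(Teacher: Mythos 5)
The paper does not prove this theorem; it cites it from \cite[Th.~2.1]{cf:GB}. Your proof is correct and follows the standard route used there: identifying $\tf(0,h)$ as the unique nonnegative root of the implicit equation $\Phi(F)=e^{-h}$ via the generating function $\sum_N e^{-bN}Z_N(h)=(1-e^h\Phi(b))^{-1}$, then expanding $1-\Phi(F)$ for small $F$ by dominated convergence when $\bE(\tau_1)<\infty$ and by a Karamata Tauberian theorem when $\ga\in(0,1)$, and inverting through the regularly varying asymptotic inverse $R_\ga$.
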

As a consequence of Theorem \ref{th:pure} and \eqref{eq:Fann}, the
annealed critical point is simply given by
\begin{equation}
\label{eq:hann}
  h^{ann}_c(\beta):=\sup\{h:\tf^{ann}(\gb,h)=0\}=-\log \M(\beta).
\end{equation}

Via Jensen's inequality one has immediately that $\tf(\gb,h)\le
\tf^{ann}(\gb, h)$ and as a consequence $h_c(\gb)\ge h_c^{ann}(\gb)$,
and the point of the present paper is to understand when this last
inequality is strict.  In this respect, let us recall that the
following is known: if $\ga\in(0,1/2)$, then $h_c(\gb)=
h_c^{ann}(\gb)$ for $\beta$ small enough \cite{cf:Ken,cf:T_cmp}. Also
for $\ga =1/2$ it has been shown that $h_c(\gb)= h_c^{ann}(\gb)$ if
$L(\cdot) $ diverges sufficiently fast (see below).  Moreover,
assuming that $\bbP(\go_1>t)>0$ for every $t>0$, one has that for
every $\ga>0$ and $L(\cdot)$ there exists $\beta_0<\infty$ such that
$h_c(\gb)\ne h_c^{ann}(\gb)$ for $\gb>\beta_0$ \cite{cf:T_AAP}:
quenched and annealed critical points differ for {\sl strong
  disorder}. The strategy we develop here addresses the complementary
situations: $\ga>1/2$ and {\sl small disorder} (and also the case
$\ga=1/2$ as we shall see below).

\bigskip

Our first result concerns the case $\ga>1$:
\medskip

\begin{theorem}
\label{th:a>1}
  Let $\alpha>1$. There exists $a>0$ such that for every $\beta\le1$
  \begin{equation}
    \label{eq:a>1}
h_c(\beta)-h_c^{ann}(\beta)\ge a \beta^2.
  \end{equation}
Moreover, $h_c(\gb)>h_c^{ann}(\beta)$ for every $\gb>0$.
\end{theorem}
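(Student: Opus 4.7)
The plan is to deploy the \emph{fractional moment method}. For any $\gamma\in(0,1)$, Jensen's inequality yields
\begin{equation*}
\bbE[\log Z_{N,\omega}] \;\le\; \frac{1}{\gamma}\log\bbE\bigl[Z_{N,\omega}^\gamma\bigr],
\end{equation*}
so the quantitative bound $h_c(\beta)\ge h_c^{ann}(\beta)+a\beta^2$ will follow if I can show $\bbE[Z_{N,\omega}^\gamma]\to 0$ as $N\to\infty$ at $h=h_c^{ann}(\beta)+a\beta^2$, for some $\gamma\in(0,1)$ and some $a>0$ independent of $\beta\in(0,1]$. The qualitative statement $h_c(\beta)>h_c^{ann}(\beta)$ for all $\beta>0$ would then follow by combining the small-disorder regime obtained here with the strong-disorder shift of \cite{cf:T_AAP}.

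The correct length scale is the annealed correlation length. Since $\alpha>1$, Theorem~\ref{th:pure}(1) gives $\tf(0,\Delta h)\sim\Delta h/\bE(\tau_1)$ as $\Delta h\searrow 0$, so at $h=h_c^{ann}(\beta)+a\beta^2$ this length is of order $1/(a\beta^2)$. I set $\ell:=\lfloor 1/(a\beta^2)\rfloor$ and first establish a \emph{single-block estimate}: $\bbE[Z_{\ell,\omega}^\gamma]\le\eta(a)$ with $\eta(a)\to 0$ as $a\to 0$, uniformly in $\beta\in(0,1]$. The tool is a change of measure. Let $\tilde{\bbP}$ be the law under which each $\omega_n$, $n\in\{1,\ldots,\ell\}$, is shifted by $-\mu:=-\delta/\sqrt{\ell}$. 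Hölder's inequality (with exponents $1/(1-\gamma)$ and $1/\gamma$) gives
\begin{equation*}
\bbE\bigl[Z_{\ell,\omega}^\gamma\bigr]\;\le\;\tilde{\bbE}\!\left[\left(\frac{\mathrm{d}\bbP}{\mathrm{d}\tilde{\bbP}}\right)^{\!1/(1-\gamma)}\right]^{1-\gamma}\tilde{\bbE}\bigl[Z_{\ell,\omega}\bigr]^\gamma.
\end{equation*}
The first factor is bounded by $\exp(c_\gamma\delta^2)$, independent of $\ell$ and $\beta$ (exact for Gaussian $\omega$, to leading order otherwise via Taylor expansion of $\log\M$). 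The second equals $Z_\ell(h_{eff})^\gamma$ with effective parameter $h_{eff}\simeq (a-\delta\sqrt a)\beta^2$; choosing $\delta=C\sqrt a$ with $C$ a large constant, one finds $\ell|h_{eff}|\simeq C-1$, and since $\bE(\tau_1)<\infty$ a standard renewal estimate (terminating renewal equation plus the renewal theorem) yields $Z_\ell(h_{eff})\le \mathrm{const}\cdot\exp(-(C-1)/\bE(\tau_1))$. Thus $\eta(a)\le\exp\bigl(c_\gamma C^2 a-\gamma(C-1)/\bE(\tau_1)\bigr)$, arbitrarily small upon first choosing $C$ large and then $a$ small.

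The main obstacle is the \emph{iteration} from one block to a system of size $N=M\ell$ with $M\to\infty$. I would coarse-grain the renewal $\tau$ over the blocks $B_k:=\{k\ell,\ldots,(k+1)\ell-1\}$ according to the set $I\subset\{0,\ldots,M\}$ of visited blocks (forced to contain $0$ and $M$) together with the first and last points $a_k,b_k$ of $\tau\cap B_k$ for $k\in I$, writing schematically
\begin{equation*}
Z_{N,\omega}\;=\;\sumtwo{I\ni 0,M}{I\subset\{0,\ldots,M\}}\ \sum_{\{(a_k,b_k)\}_{k\in I}}\ \prod_{k\in I}\tilde Z^{B_k}_{a_k,b_k,\omega}\,\prodtwo{k,k'}{\mathrm{consecutive\,in\,}I}K(a_{k'}-b_k)\,e^{\beta\omega_{a_{k'}}+h},
\end{equation*}
where $\tilde Z^{B_k}_{a_k,b_k,\omega}$ is the in-block partition function from $a_k$ to $b_k$. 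Applying $(\sum x_i)^\gamma\le\sum x_i^\gamma$ and using independence of $\omega$ across blocks, each visited block contributes (a slight modification of) $\eta(a)$, while the jump weights contribute $K(\cdot)^\gamma$ factors whose sums converge because $\gamma(1+\alpha)>1$ for $\gamma$ close to $1$ (here $\alpha>1$ leaves ample room). This produces a bound of the schematic form $\bbE[Z_{N,\omega}^\gamma]\le(C_\gamma\eta(a))^M$, which decays geometrically in $M$ once $a$ is small enough, completing the proof. The delicate points are making this decomposition rigorous, absorbing the $e^{\beta\omega_{a_{k'}}}$ factors into the block contributions without spoiling block independence, and controlling the sum over the $(a_k,b_k)$ without losing the smallness of $\eta(a)$.
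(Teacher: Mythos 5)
Your single-block estimate is essentially the same as the paper's key step: a change of measure (tilt/shift of the $\omega_i$'s on a scale $\lambda\sim 1/\sqrt\ell$) combined with H\"older's inequality to bound the fractional moment, then a pure-model estimate exploiting that $\ell$ is of the order of the annealed correlation length $1/\tf(0,a\beta^2)\approx \bE(\tau_1)/(a\beta^2)$. The paper does precisely this in Lemma~\ref{th:bernard} together with \eqref{eq:MMM}--\eqref{eq:law}, and your choice $\delta=C\sqrt a$ is the analogue of the paper's free parameter $a$.

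Where you diverge sharply is the iteration. The paper does not coarse-grain into $M$ blocks: instead it uses the single identity \eqref{eq:rec2}, decomposing $Z_{N,\omega}$ according to the last renewal epoch $\le N-k$ and the first one $> N-k$, then applies $(\sum x_i)^\gamma\le\sum x_i^\gamma$ once and obtains (Proposition~\ref{th:deloc}) the clean implication: if $\rho:=\bbE[z_1^\gamma]\sum_{n\ge k}\sum_{j<k}K(n-j)^\gamma A_j\le 1$ then $A_N$ stays bounded for all $N$ by induction, hence $\tf(\beta,h)=0$. The task then reduces to showing $\rho<1$, which needs the single-block estimate only for $j$ close to $k$ (for smaller $j$, plain Jensen and the annealed bound $A_j\le C$ suffice because $(1+\alpha)\gamma>2$). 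No multi-block bookkeeping is needed.

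Your proposed multi-block coarse-graining is a viable alternative --- it is in fact the route taken in later work on the marginal case --- but as sketched it has a genuine gap that you yourself flag. After $(\sum)^\gamma\le\sum(\cdot)^\gamma$, each term of the sum over $I$ still contains an order-$\ell^2$ sum over the in-block endpoints $(a_k,b_k)$; the quantity that must be shown small per block is not $\bbE[Z_{\ell,\omega}^\gamma]$ but the tilted expectation of a sum over all $(a_k,b_k)$ of block partition functions times jump kernels, and the change of measure must be applied \emph{before} splitting that inner sum (or simultaneously across all visited blocks), otherwise the factor per block is not controllably small. Moreover your single-block estimate is for the specific boundary conditions $(0,\ell)$, not for a general $(a_k,b_k)$. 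None of these issues is fatal, but they require real work (e.g.\ a uniform bound over boundary conditions, summability of $K^\gamma$ across non-adjacent blocks, and control of $\exp(\mathrm{const}\cdot|I|)$ from the Radon--Nikodym factor against $\eta(a)^{|I|}$). The paper's one-cutoff recursion sidesteps all of it, and also yields the extra information \eqref{eq:if2} for free. I would encourage you either to work out those combinatorics in full, or to replace your iteration by the renewal-type argument of Proposition~\ref{th:deloc}.
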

\medskip

Since $h_c(\gb)\le h_c(0)=0$ 
and $h_c^{ann}(\gb)\stackrel{\gb\searrow0}\sim-\gb^2/2$, we conclude
that the inequality \eqref{eq:a>1} is, in a sense, of the optimal
order in $\gb$. 
Note that $h_c(\gb)\le h_c(0)$ is just a consequence of Jensen's inequality:
\begin{eqnarray}
Z_{N,\go}&= &Z_N(h)\frac{\bE \left(e^{\sum_{n=1}^N(\gb\go_n+h)\ind_{\{n\in\tau\}}}\ind_{\{N\in\tau\}}\right)}
{\bE\left(e^{h\sum_{n=1}^N\ind_{\{n\in\tau\}}}\ind_{\{N\in\tau\}}\right)}\\\nonumber
&\ge& Z_N(h)\exp\left[
\gb\sum_{n=1}^N\go_n \frac{\bE\left(\ind_{\{n\in\tau\}}e^{h|\tau\cap\{1,\ldots,N\}|}\ind_{\{N\in\tau\}}\right)}
{\bE\left(e^{h|\tau\cap\{1,\ldots,N\}|}\ind_{\{N\in\tau\}}\right)}
\right],
\end{eqnarray}
from which $\tf(\gb,h)\ge\tf(0,h)$ and therefore $h_c(\gb)\le h_c(0)$ immediately follows from $\bbE(\go_n)=0$.
This can be made sharper in the sense that from the
explicit bound in \cite[Th.~5.2(1)]{cf:GB}  one directly extract also
that $h_c(\gb) \le -b \gb^2$ for a suitable $b\in (0,1/2)$ and
every $\gb \le 1$, so that $-h_c(\gb)/\gb^2 \in (b, 1/2-a)$.
We recall also that the (strict) inequality $h_c(\gb)<  h_c(0)$ has been
established in great generality in \cite{cf:AS}.

\medskip

In the case $\ga\in(1/2,1)$ we have the following:
\medskip

\begin{theorem}
  \label{th:a121} Let $\alpha\in(1/2,1)$. For every $\gep>0$ there
exists $a(\gep)>0$ such that 
\begin{equation}
\label{eq:witheps}
  h_c(\gb)-h_c^{ann}(\gb)\ge a(\gep)\,\gb^{(2\alpha/(2\alpha-1))+\gep},
\end{equation}
for $\beta\le1$. Moreover, $h_c(\gb)>h_c^{ann}(\gb)$ for every 
$\gb>0$.
\end{theorem}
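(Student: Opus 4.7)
The plan is to combine a fractional moment estimate with a coarse-graining argument. Fix $\epsilon>0$, pick a fractional exponent $\gamma\in(1/(1+\alpha),1)$, and set $h=h_c^{ann}(\beta)+\Delta$ with $\Delta:=a(\epsilon)\,\beta^{(2\alpha/(2\alpha-1))+\epsilon}$ and $a(\epsilon)>0$ to be chosen small at the end. The goal is to prove that $\tf(\beta,h)=0$, which immediately gives $h_c(\beta)\geq h$ and hence \eqref{eq:witheps}. By \eqref{eq:Fann} and Theorem \ref{th:pure}(2), the annealed free energy $\tf^{ann}(\beta,h)=\tf(0,\Delta)$ is of order $\Delta^{1/\alpha}$ up to slowly varying factors, so it is natural to work on the annealed correlation-length scale $\ell:=\lceil 1/\tf^{ann}(\beta,h)\rceil\asymp \Delta^{-1/\alpha}$.

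The key input is a one-block estimate of the form $\bbE[Z_{\ell,\omega}^\gamma]\leq\eta$, where $\eta=\eta(\alpha,\gamma)$ can be made as small as we want by shrinking $a(\epsilon)$. To get it, I would introduce an auxiliary probability $Q$ absolutely continuous with respect to $\bbP$ on $(\omega_1,\ldots,\omega_\ell)$ and apply Hölder:
\begin{equation*}
\bbE[Z_{\ell,\omega}^\gamma]\,\leq\,\Bigl(\bbE\Bigl[(dQ/d\bbP)^{-\gamma/(1-\gamma)}\Bigr]\Bigr)^{1-\gamma}\bigl(\bbE_Q[Z_{\ell,\omega}]\bigr)^\gamma.
\end{equation*}
The measure $Q$ must be close enough to $\bbP$ to keep the entropic first factor controlled, and yet biased towards unfavorable disorder so that under $Q$ the tilted annealed partition function corresponds to a pure pinning model well inside its delocalized phase, and therefore decays polynomially in $\ell$. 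Balancing the two constraints is precisely what forces the exponent $2\alpha/(2\alpha-1)$ in \eqref{eq:witheps}; the $+\epsilon$ slack absorbs the slowly varying function $R_\alpha(\cdot)$ from Theorem \ref{th:pure}(2) and the loss from demanding not merely that the one-block expectation be $O(1)$ but that it be as small as required below.

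Given the block estimate, the coarse-graining proceeds as follows. For $N=m\ell$, decompose $Z_{N,\omega}$ by specifying, for each block, either that the block is not visited by $\tau$ or the first and last contact point of $\tau$ inside it. The subadditivity $(\sum_i a_i)^\gamma\leq \sum_i a_i^\gamma$ valid for $\gamma\in(0,1)$, together with independence of the disorder across disjoint blocks, yields a bound
\begin{equation*}
\bbE[Z_{N,\omega}^\gamma]\,\leq\,\sum_{\text{visited block sequences}}\prod_{\text{visited blocks}}\eta \,\cdot\,\prod_{\text{inter-block jumps}}\bigl[\ell\,K(\ell\,\cdot)\bigr]^\gamma,
\end{equation*}
which is a renewal-type sum on the block scale. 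Since $\gamma(1+\alpha)>1$, the inter-block kernel is summable; choosing $\eta$ (equivalently $a(\epsilon)$) small enough makes the effective coarse-grained renewal transient and forces $\bbE[Z_{N,\omega}^\gamma]\to 0$. By Jensen's inequality applied to $x\mapsto x^\gamma$ and the almost-sure convergence \eqref{eq:F}, this gives $\tf(\beta,h)=0$.

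The hard step is the construction of $Q$ for $\alpha\in(1/2,1)$. A naive product Gaussian tilt (shifting the mean of each $\omega_n$ by a common small $-\delta$) produces the correct exponent $2\alpha/(2\alpha-1)$ and suffices for $\alpha>1$ (Theorem \ref{th:a>1}), but in the range $\alpha\in(1/2,1)$ it leaves $\bbE[Z_{\ell,\omega}^\gamma]$ only bounded by a constant at the critical scale; the extra smallness needed for the coarse-graining must be recovered either by a non-product $Q$ encoding two-point correlations of $\omega$ at the renewal-relevant distances — reflecting the fact that for $\alpha>1/2$ the intersection renewal of two independent copies of $\tau$ is heavy-tailed and drives up the second moment of $Z_{\ell,\omega}$ — or by paying a $\beta^\epsilon$ loss on $\Delta$. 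For the second assertion that $h_c(\beta)>h_c^{ann}(\beta)$ for every $\beta>0$, the quantitative bound already covers $\beta\in(0,1]$, while for $\beta$ large one may invoke the strong-disorder result of \cite{cf:T_AAP}; the intermediate range is handled by running the same fractional-moment argument with a non-optimal choice of $\Delta$.
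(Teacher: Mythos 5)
Your overall strategy is structurally the right one and agrees with the paper's at the high level: set $h=h_c^{ann}(\beta)+\Delta$ with $\Delta$ of the claimed order including a $\beta^\epsilon$ slack, work at the annealed correlation-length scale $\ell\asymp 1/\tf(0,\Delta)$, bound the fractional moment by H\"older against a tilted measure with $\lambda\sim\ell^{-1/2}$, and feed the resulting smallness into a decomposition of $Z_{N,\omega}$ to conclude $\tf(\beta,h)=0$. You also correctly diagnose that the product tilt alone would leave a borderline estimate without the $\epsilon$ loss, and that removing the $\epsilon$ requires a correlated tilt — that is indeed what \cite{cf:AZ} does, as noted at the end of the paper. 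Where you diverge from the paper is the decomposition: you propose a full multi-block coarse-graining, whereas the paper uses a single ``tip'' decomposition at position $N-k$ (equation \eqref{eq:rec2} and Proposition \ref{th:deloc}), yielding the elementary recursion $A_N\le\rho\max\{A_0,\dots,A_{N-k}\}$ with the single scalar condition $\rho\le1$. The paper's route is substantially lighter bookkeeping-wise: one never needs to argue that the effective coarse-grained renewal is transient.

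The genuine gap is in your coarse-graining bound $\bbE[Z_{N,\omega}^\gamma]\le\sum\prod\eta\cdot\prod\bigl[\ell K(\ell\,\cdot)\bigr]^\gamma$. A visited block does not contribute $A_\ell$: it contributes $A_{b-a}$, where $a,b$ are the first and last renewal points in that block, and $A_{b-a}$ ranges all the way from $A_0=1$ (a block touched at a single point) down to $A_\ell$. So the single one-block estimate $A_\ell\le\eta$ does not control the visited-block factors; one must sum $A_{b-a}$ against the two adjacent kernel weights $K(\cdot)^\gamma$ over all endpoint positions, and the contributions with $b-a$ small are controlled only by the polynomial decay of $A_j$ for \emph{all} $j\le\ell$, not by the value at $j=\ell$. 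This is exactly what the paper supplies via the splitting into $S_3$ and $S_4$: for $j\le k^{1-\epsilon^2}$ it uses Jensen together with Lemma \ref{th:lemmaU2} (the sub-correlation-length bound $Z_j(\Delta)\le C/(j^{1-\alpha}L(j))$), while for $j$ near $k$ the tilt estimate of Lemma \ref{th:lemmaA} takes over. Your sketch invokes neither a sub-correlation-length estimate on the pure partition function nor any control of $A_j$ for $j\ll\ell$, and without it the coarse-grained sum is not demonstrably convergent, let alone small. As a secondary point, for the last assertion ($h_c(\beta)>h_c^{ann}(\beta)$ for all $\beta>0$) you lean on \cite{cf:T_AAP}, which needs $\omega_1$ unbounded above; the paper sidesteps this by observing that the whole estimate is uniform over $\beta\le\beta_0$ with $\beta_0$ arbitrary.
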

\medskip

To appreciate this result, recall that in \cite{cf:Ken,cf:T_cmp} it
was proven that
\begin{equation}
\label{eq:fromKA-FT}
    h_c(\gb)-h_c^{ann}(\gb)\le \tilde L(1/\gb)\gb^{2\alpha/(2\alpha-1)},
\end{equation}
for some (rather explicit, {\sl cf.} in particular \cite{cf:Ken}) slowly
varying function $\tilde L(\cdot)$. Notably,  $\tilde L(\cdot)$ is
trivial if $L(\cdot)$ is.  The conclusion of Theorem~\ref{th:a121} can
actually be strengthened and we are able to replace the right-hand
side of \eqref{eq:witheps} with $\bar
L(1/\gb)\gb^{2\alpha/(2\alpha-1)}$ with $\bar L(\cdot)$ another slowly
varying function, but on one hand $\bar L(\cdot)$ does not match the
bound in \eqref{eq:fromKA-FT} and on the other hand it is rather clear
that it reflects more a limit of our technique than the actual
behavior of the model; therefore, we  decided to present the simpler
argument leading to the slightly weaker result \eqref{eq:witheps}.

\medskip
The case $\ga=1/2$ is the most delicate, and whether quenched and
annealed critical points coincide or not crucially depends on the
slowly varying function $L(\cdot)$. In \cite{cf:Ken,cf:T_cmp} it was
proven that, whenever
\begin{equation}
\label{eq:sommaconverge}
\sum_{n\ge1}\frac{1}{n\,L(n)^2}<\infty,  
\end{equation}
there exists $\beta_0>0$ such that $h_c(\beta)=h_c^{ann}(\gb)$ for $\gb\le
\gb_0$, and that when the same sum diverges then 
$h_c(\beta)-h_c^{ann}(\gb)$ is bounded {\sl above} by some function of $\gb$
which vanishes faster than any power for $\gb\searrow0$. For instance, if
$L(\cdot)$ is asymptotically constant then
\begin{equation}
  h_c(\beta)-h_c^{ann}(\gb)\le c_1\,e^{-c_2/\gb^2},
\end{equation}
for $\gb\le1$. While we are not able to prove that quenched and
annealed critical points differ as soon as condition
\eqref{eq:sommaconverge} fails  (in particular not when $L(\cdot)$
is asymptotically constant), our method can be pushed further to prove this if
$L(\cdot)$ vanishes sufficiently fast at infinity:

\medskip

\begin{theorem}
  \label{th:a12} Assume that for every $n\in\N$
  \begin{equation}
\label{eq:Lpiccola}
    K(n)\le c\frac{n^{-3/2}}{(\log n)^\eta},
  \end{equation}
for some $c>0$ and $\eta>1/2$. Then 
for every $0<\gep<\eta-1/2$ there exists $a(\gep)>0$ such that
\begin{equation}
  \label{eq:a12}
  h_c(\beta)-h_c^{ann}(\beta)\ge a(\gep) 
\exp\left(-\frac1{\gb^{\frac{1}{\eta-1/2-\gep}}}\right).
\end{equation}
Moreover, $h_c(\gb)>h_c^{ann}(\beta)$ for every $\gb>0$.
\end{theorem}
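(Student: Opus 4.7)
The plan is to adapt the fractional moment and change-of-measure scheme underlying Theorems~\ref{th:a>1} and \ref{th:a121}, pushing it to the marginal exponent $\ga=1/2$ by exploiting the strong decay of $L(\cdot)$ encoded in \eqref{eq:Lpiccola}. Fix $\gamma\in(0,1)$ and write $h=h_c^{ann}(\gb)+\gD$ with $\gD>0$. To establish $h_c(\gb)\geq h_c^{ann}(\gb)+\gD$ it is enough to show that $\bbE[Z_{N,\go}^{\gamma}]\to 0$ as $N\to\infty$: combined with super-additivity of $\log Z_{N,\go}$ and Jensen's inequality, this forces $\tf(\gb,h)\leq 0$, hence $=0$. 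After the usual substitution absorbing $\log\M(\gb)$ into $h$, I may work with the pure renewal weights and a "distance from annealed criticality" equal to $\gD$.

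First I would fix a coarse-graining scale $\ell$ of the order of the annealed correlation length at $h=\gD$. Theorem~\ref{th:pure}(2) applied under \eqref{eq:Lpiccola} yields, after a short computation with slowly varying functions, $\hat L(1/\gD)\approx|\log\gD|^{2\eta}$, so that the annealed free energy is of order $\gD^2|\log\gD|^{2\eta}$ and $\ell\approx\gD^{-2}|\log\gD|^{-2\eta}$. Split $\{1,\dots,N\}$ into blocks $B_k=((k-1)\ell,k\ell]$, and for $I\subset\{1,\dots,\lceil N/\ell\rceil\}$ let $\hat Z_I$ denote the contribution to $Z_{N,\go}$ of renewal trajectories whose set of visited blocks is exactly $I$. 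Using $(a+b)^{\gamma}\leq a^{\gamma}+b^{\gamma}$ for $\gamma\in(0,1)$,
\begin{equation*}
\bbE\bigl[Z_{N,\go}^{\gamma}\bigr]\,\leq\,\sum_{I}\bbE\bigl[\hat Z_I^{\gamma}\bigr].
\end{equation*}

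On each block $B_k\in I$ I would introduce a probability density $g_k=g_k(\go|_{B_k})\ge 0$ with $\bbE[g_k]=1$, and apply H\"older's inequality with conjugate exponents $1/\gamma$ and $1/(1-\gamma)$:
\begin{equation*}
\bbE\bigl[\hat Z_I^{\gamma}\bigr]\,\leq\,\prod_{k\in I}\bbE\bigl[g_k^{-\gamma/(1-\gamma)}\bigr]^{1-\gamma}\cdot\Bigl(\bbE\Bigl[\prod_{k\in I}g_k\cdot\hat Z_I\Bigr]\Bigr)^{\gamma}.
\end{equation*}
The first factor is the entropic cost of the tilt, the second the tilted annealed expectation. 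Since the coarse-grained block-to-block renewal inherits its inter-arrival tails from $K(\cdot)$, summing over $I$ reduces the task to showing that an appropriate choice of $g_k$ makes each block contribute a factor uniformly strictly less than one to a geometric series indexed by $|I|$.

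The main obstacle, and the place where assumption \eqref{eq:Lpiccola} enters decisively, is the construction of $g_k$. In the marginal regime I would use a Gaussian-type quadratic tilt $g_k(\go)=\exp(-\lambda J_k(\go)-\gG(\lambda))$ with
\begin{equation*}
J_k(\go)\,=\,\sumtwo{n,m\in B_k}{n\ne m} u_{|n-m|}\,\go_n\go_m,\qquad u_n:=\bP(n\in\tau),
\end{equation*}
$\gG(\lambda)$ the normalizing constant and $\lambda=\lambda(\gb)>0$ a small tuning parameter. Such a tilt injects a small negative correlation between $\go_n$ and $\go_m$ at typical renewal lag, depressing the tilted annealed block partition function by a factor strictly less than one while paying entropy of order $\exp\bigl(c\,\lambda^2\sum_{n\leq\ell}u_n^2\bigr)$. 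Under \eqref{eq:Lpiccola}, renewal-theoretic bounds give $\sum_{n\leq\ell}u_n^2$ of order $(\log\ell)^{2\eta+1}$; the hypothesis $\eta>1/2$ is exactly what allows one to tune $\lambda$ so that simultaneously $\lambda^2\sum u_n^2=O(1)$ and $\lambda\gb^2\sum u_n^2\to\infty$. Matching these requirements against $\ell\approx\gD^{-2}|\log\gD|^{-2\eta}$ forces $\gD$ to be at most of order $\exp(-1/\gb^{1/(\eta-1/2-\gep)})$, which is precisely \eqref{eq:a12}. The wetting case $L\to$~const, for which $\sum u_n^2$ diverges only logarithmically, sits just outside this window, explaining why the present method cannot yet treat it.
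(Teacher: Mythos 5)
Your sketch takes a genuinely different route from the paper. The paper stays entirely inside the one-step fractional moment recursion of Proposition~\ref{th:deloc}: with $k=1/\tf(0,\gD)$, $\gD=a\exp(-\gb^{-1/(\eta-1/2-\gep)})$ and, crucially, a size-dependent exponent $\gga(k)=1-1/\log k$, the sum in \eqref{eq:if} is split into $j\le k/R_2$ (Jensen plus Lemma~\ref{th:lemmaU2}) and $j>k/R_2$ (Lemma~\ref{th:bernard}, i.e.\ a \emph{linear} negative shift of the mean of the $\go_i$ with $\gl=(j\log j)^{-1/2}$, plus Lemma~\ref{th:lemmaa12}). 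Your plan instead coarse-grains $\{1,\dots,N\}$ into blocks of the annealed correlation length, decomposes $Z$ over visited-block sets $I$, and tilts each visited block with a \emph{quadratic} change of measure $g_k=\exp(-\gl J_k-\gG(\gl))$ with $J_k=\sum u_{|n-m|}\go_n\go_m$. This is essentially the scheme later used in \cite{cf:GLT08} for the harder case of trivial $L(\cdot)$, and it is heavier machinery than what the hypothesis $\eta>1/2$ actually requires.

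Two concrete gaps. First, the step ``summing over $I$ reduces the task to a geometric series indexed by $|I|$'' is not automatic: after H\"older you are left with a sum over subsets $I$ weighted by a coarse-grained kernel that is itself regularly varying of index $-3/2$; controlling $\sum_I\prod_{k\in I}(\text{block factor})\cdot(\text{kernel})$ requires a separate renewal-type estimate (carried out in \cite{cf:GLT} and \cite{cf:GLT08}) and is not a simple geometric series. Second, the tuning you state does not match the claimed exponent over the whole range $\eta>1/2$. From $\gl^2\sum_{n\le\ell}u_n^2=O(1)$ and $\gl\gb^2\sum_{n\le\ell}u_n^2\ge c$, together with $\sum_{n\le\ell}u_n^2\asymp(\log\ell)^{2\eta+1}$, one gets $\gb^2\gtrsim(\log\ell)^{-(\eta+1/2)}$, i.e.\ $\log\ell\gtrsim\gb^{-4/(2\eta+1)}$, i.e.\ $\gD\lesssim\exp(-c\,\gb^{-4/(2\eta+1)})$. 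For small $\gep$ this is compatible with \eqref{eq:a12} only when $4/(2\eta+1)\le 1/(\eta-1/2-\gep)$, i.e.\ $\eta\le 3/2+2\gep$; for $\eta>3/2$ your stated balance produces a strictly worse bound than the theorem claims. Either the ``gain'' quantity should not be $\gl\gb^2\sum u_n^2$ (the effective gain of the quadratic tilt on a visited block involves the conditioned renewal overlap and is not correctly captured by this expression), or a different $\gl$-scaling is needed; as written, the argument does not reach \eqref{eq:a12} for all $\eta>1/2$, whereas the paper's linear-tilt argument does.
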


\medskip

\subsection{Fractional moment method}
In order to introduce our basic idea and, effectively, start the proof, we need some additional notation.
We fix some $k\in\N$ and we set for $n\in\N$
\begin{equation}
  \label{eq:z}
  z_n:=e^{h+\beta\omega_n}.
\end{equation} 
Then, the following identity holds for $N\ge k$:
\begin{equation}
  \label{eq:rec2}
  Z_{N,\omega}\, =\, \sum_{n=k}^N Z_{N-n,\omega}\sum_{j=0}^{k-1}K(n-j)\,
  {z_{N-j}} Z_{N-j,N,\omega}.
\end{equation}
This is simply obtained by decomposing the partition function \eqref{eq:Z}
according to the value $N-n$ of the last point of $\tau$ which does
not exceed $N-k$ (whence the condition $0\le N-n\le N-k$ in the sum), and to the value $N-j$ of the first point of $\tau$
to the right of $N-k$ (so that $N-k<N-j\le N$).  It is important to notice that
$Z_{N-j,N,\omega}$
has the same law as $Z_{j,\go}$ and that the three random variables 
$Z_{N-n,\go}$, $z_{N-j}$ and 
$ Z_{N-j,N,\omega}$
 are independent, provided that $n\ge k$ and $j<k$.

\begin{figure}[h]
\begin{center}
\leavevmode
\epsfysize =2.3 cm
\epsfxsize =14.5 cm
\psfragscanon
\psfrag{0}[c]{$0$}
\psfrag{N}[c]{$N$}
\psfrag{N-k}[c]{\small $N-k$}
\psfrag{N-j}[c]{\small $N-j$}
\psfrag{N-n}[c]{\small $N-n$}
\psfrag{Z1}[c]{$Z_{N-n, \go}$}
\psfrag{Z2}[c]{$K(n-j) z_{N-j}$}
\psfrag{Z3}[c]{$Z_{ N-j,N, \go}$}
\epsfbox{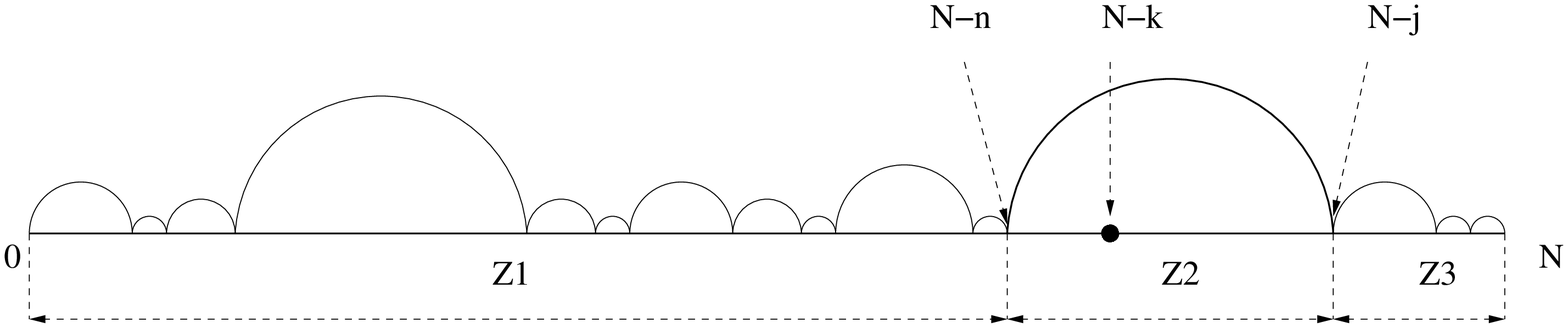}
\end{center}
\caption{\label{fig:chop} 
The decomposition of the partition function
is simply obtained by fixing a value of $k$ and 
summing over the values of the last contact (or renewal epoch) before $N-k$ and the first
after $N-k$. In the drawing the two contacts are respectively $N-n$ and $N-j$ and arcs of course  identify steps between successive contacts.}
\end{figure}

\medskip

Let $0<\gamma<1$ and $A_N:=\bbE[(Z_{N,\omega})^\gamma]$, with $A_0:=1$.
Then, from \eqref{eq:rec2} and using the elementary inequality 
\begin{equation}
  \label{eq:ineqgamma}
(a_1+\ldots+a_n)^\gamma\le
a_1^\gamma+\ldots+a_n^\gamma , 
\end{equation}
which holds for $a_i\ge0$, one deduces
\begin{equation}
  \label{eq:rec3}
  A_N\le 
\bbE[ z_1^\gamma] \sum_{n=k}^N A_{N-n}
\sum_{j=0}^{k-1}K(n-j)^\gamma A_j.
\end{equation}

The basic principle is the following:

\medskip

\begin{proposition}
\label{th:deloc}
  Fix $\beta$ and $h$. If there exists $k\in\N$ and $\gamma<1$ such that 
\begin{equation}
  \label{eq:if}
\rho\,:=\,  \bbE[ z_1^\gamma]  \sum_{n=k}^\infty 
\sum_{j=0}^{k-1}K(n-j)^\gamma A_j\le1,
\end{equation}
then $\tf(\beta,h)=0$. Moreover if
$\rho<1$ there exists $C=C(\rho, \gamma, k, K(\cdot))>0$ such that
\begin{equation}
  \label{eq:if2}
  A_N \, \le \, C \left( K(N)\right)^\gamma,
  \end{equation}
  for every $N$.
\end{proposition}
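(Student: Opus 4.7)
The plan is to rewrite the recursion \eqref{eq:rec3} as a single convolution-type inequality and then to use a generating-function argument for the vanishing of the free energy, together with the theory of defective renewals with regularly varying tails for the polynomial bound. Setting
\begin{equation*}
p_n:=\bbE[z_1^\gamma]\sum_{j=0}^{k-1}K(n-j)^\gamma A_j\ \ (n\ge k),\qquad p_n:=0\ \ (0\le n<k),
\end{equation*}
one has $\sum_n p_n=\rho$, and for $N\ge k$ inequality \eqref{eq:rec3} reads $A_N\le (p*A)_N:=\sum_{n=0}^N p_n A_{N-n}$; since $p$ vanishes on $[0,k)$, this extends to $A_N\le (p*A)_N+A_N\ind_{\{N<k\}}$ for every $N\ge 0$. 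The initial values $A_j$, $j<k$, are finite by Jensen ($A_j\le (\bbE Z_{j,\go})^\gamma<\infty$), while the regular variation of $K$ yields $p_n\sim \mu K(n)^\gamma$ as $n\to\infty$, with $\mu:=\bbE[z_1^\gamma]\sum_{j=0}^{k-1}A_j$.

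For the first claim ($\tf(\gb,h)=0$ whenever $\rho\le 1$), I would introduce $F(\gl):=\sum_{N\ge 0}A_Ne^{-\gl N}$ and $\hat p(\gl):=\sum_n p_ne^{-\gl n}$. Weighting the recursion by $e^{-\gl N}$ and exchanging the order of summation gives
\begin{equation*}
F(\gl)\le \hat p(\gl)\,F(\gl)+\sum_{N=0}^{k-1}A_Ne^{-\gl N}.
\end{equation*}
For every $\gl>0$ one has $\hat p(\gl)<\hat p(0)=\rho\le 1$, so $F(\gl)\le (\sum_{N<k}A_Ne^{-\gl N})/(1-\hat p(\gl))<\infty$. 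Hence $A_N\le C_\gl e^{\gl N}$ for every $\gl>0$, i.e., $\limsup_N N^{-1}\log A_N\le 0$. Combining with Jensen ($\bbE[\log Z_{N,\go}]=\gamma^{-1}\bbE[\log Z_{N,\go}^\gamma]\le \gamma^{-1}\log A_N$) and the trivial lower bound $\tf(\gb,h)\ge 0$, one concludes $\tf(\gb,h)=0$.

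For the sharper bound \eqref{eq:if2} under $\rho<1$, I would iterate the inequality $A_N\le (p*A)_N+A_N\ind_{\{N<k\}}$; since $p^{*L}$ is supported on $[kL,\infty)$, the iteration terminates for each fixed $N$ and produces
\begin{equation*}
A_N\le (U*\hat A)_N=\sum_{j=0}^{k-1}A_j\,U_{N-j},
\end{equation*}
where $\hat A_j:=A_j\ind_{\{j<k\}}$ is finitely supported and $U:=\sum_{\ell\ge 0}p^{*\ell}$ is the defective renewal mass, of total mass $1/(1-\rho)$. The finiteness of $\rho$ forces $\gamma(1+\ga)>1$, so $p/\rho$ has a regularly varying tail distribution with negative exponent and therefore belongs to the subexponential class. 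The classical defective-renewal asymptotic then gives $U_N\sim p_N/(1-\rho)^2\asymp K(N)^\gamma$ as $N\to\infty$, and the convolution with the finitely supported $\hat A$, together with an adjustment of the constant $C$ to absorb any finite set of small-$N$ terms, yields \eqref{eq:if2}.

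The main technical hurdle is precisely the sharp pointwise asymptotic $U_N\sim p_N/(1-\rho)^2$: although standard once the regular variation of $p$ with tail exponent $-\gamma(1+\ga)<-1$ has been recognised, it requires some care in this sub-probability regime to pass from the tail-distribution asymptotic for $p^{*\ell}$ to the pointwise estimate on $U_N$ and then to the convolution with $\hat A$.
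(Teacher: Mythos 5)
Your proof of the second claim (\eqref{eq:if2}) is essentially the paper's own argument: you build the defective renewal measure $U=\sum_{\ell\ge 0}p^{*\ell}$ (the paper calls it $u_N=\bP(N\in\tilde\tau)$ with inter-arrival $Q_k=p$), dominate $A_N$ by $(U*\hat A)_N$, and invoke the asymptotic $U_N\sim p_N/(1-\rho)^2$ for terminating renewals with regularly varying inter-arrival tails; the paper cites precisely this as \cite[Th.~A.4]{cf:GB}. Nothing new here, and your noted ``technical hurdle'' is indeed the content of that reference.

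For the first claim you take a genuinely different route. The paper argues directly: from \eqref{eq:rec3} one gets $A_N\le\rho\max\{A_0,\ldots,A_{N-k}\}$ for $N\ge k$, whence by a one-line induction $\sup_N A_N\le\max\{A_0,\ldots,A_{k-1}\}$ when $\rho\le 1$. This gives a \emph{uniform} bound on $A_N$, which is both stronger and more elementary than your Laplace-transform argument, which only yields subexponential growth of $A_N$ (sufficient for the conclusion, but weaker). Moreover, your write-up has a small but real gap: you derive $F(\gl)\le\hat p(\gl)F(\gl)+\sum_{N<k}A_Ne^{-\gl N}$ and immediately divide by $1-\hat p(\gl)$ to conclude $F(\gl)<\infty$. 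This step is illegal if $F(\gl)=\infty$, since $\infty\le\hat p(\gl)\cdot\infty+C$ carries no information. You need either to observe first that $F(\gl)<\infty$ for $\gl$ large enough (which does follow from the annealed Jensen bound $A_N\le(\bbE Z_{N,\go})^\gamma$ and the exponential growth of the annealed partition function), or, more cleanly, to run the argument on the truncated sums $F_M(\gl):=\sum_{N\le M}A_Ne^{-\gl N}$, obtain $F_M(\gl)\le\sum_{N<k}A_Ne^{-\gl N}/(1-\hat p(\gl))$ uniformly in $M$, and let $M\to\infty$. With that repair, your approach is sound; still, the paper's induction is shorter and avoids introducing generating functions altogether.
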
 
\medskip

Of course, in view of the results we want to prove, 
the main result of Proposition~\ref{th:deloc} is the first one.
The second one, namely \eqref{eq:if2}, is however of independent
interest and may be used to obtain path estimates on the process
(using for example the techniques in \cite{cf:GTdeloc} and
\cite[Ch.~8]{cf:GB}).

\medskip

{\sl Proof of Proposition \ref{th:deloc}}.  Let
$\bar A:=\max\{A_0,A_1,\ldots,A_{k-1}\}$. From \eqref{eq:rec3} it
follows that for every $N\ge k$
\begin{equation}
  A_N\, \le \, \rho \max\{A_0,\ldots,A_{N-k}\},
\end{equation}
from which one sees by induction that, since $\rho\le 1$, for every
$n$ one has $A_n\le \bar A$.  The statement $\tf(\beta,h)=0$ follows
then from Jensen's inequality:
\begin{equation}
  \tf(\beta,h)=\lim_{N\to\infty}\frac1{N\gamma}\bbE \log (Z_{N,\go})^\gamma\le 
\lim_{N\to\infty}\frac1{N\gamma}\log A_N=0.
\end{equation}


In order to prove \eqref{eq:if2} we introduce
\begin{equation}
Q_k(n)\, :=\, \begin{cases}
\bbE[ z_1^\gamma]
\sum_{j=0}^{k-1}K(n-j)^\gamma A_j,
&\text{ if } n \ge k,
\\
0 &\text{ if } n=1, \ldots k-1.
\end{cases}
\end{equation}
Since $\rho= \sum_n Q_k(n)$, the assumption $\rho<1$
tells us that $Q_k(\cdot)$ is a sub-probability distribution
and it becomes a probability distribution if we set, as we do, $Q_k(\infty):=
1-\rho$. Therefore the renewal process $\tilde \tau$ with 
inter-arrival law $Q_k(\cdot)$ is {\sl terminating}, that is 
$\tilde \tau$ contains, almost surely, only a finite number of points.
A particularity of terminating renewals with regularly varying
inter-arrival  distribution is  the asymptotic equivalence, up
to a multiplicative factor, of inter-arrival distribution and mass renewal function
(\cite[Th.~A.4]{cf:GB}), namely
\begin{equation}
\label{eq:3stars}
{u_N}\, \stackrel{N\to \infty} \sim\, \frac 1{(1-\rho)^2} {Q_k(N)},
\end{equation}
where $u_N:= \bP (N \in \tilde \tau)$ and it satisfies the 
renewal equation $u_N= \sum_{n=1}^N u_{N-n} Q_k(n)$
for $N\ge 1$ (and $u_0=1$). Since $Q_k(n)=0$ for 
$n=1, \ldots, k-1$, for the same values of $n$ we have 
$u_n=0$ too. Therefore the renewal equation may be rewritten,
for $N \ge k$,
as
\begin{equation}
\label{eq:1star}
u_N\, =\, \sum_{n=1}^{N-k} u_{N-n} Q_k(n) \, +\,  Q_k(N).
\end{equation}

Let us observe now that if we set $\tilde A_N := A_N \ind _{N \ge k}$
then \eqref{eq:rec3} implies that for $N \ge k$
\begin{equation}
\tilde A_N \, \le \,  \sum_{n=1}^{N-k}\tilde A_{N-n} Q_k(n) \, +\,
P_k(N), \ \ \ \text{ with } \ \ 
P_k(N)\, :=\, \sum_{n=0}^{k-1} A_n Q_k(N-n),
\end{equation}
and observe  that $P_k(N) \le c \, Q_k(N)$, with
$c$ that depends on $\rho$, $\gamma$, $k$ and $K(\cdot)$
(and on $h$ and $\gb$, but these variables  are kept fixed).
Therefore
\begin{equation}
\label{eq:2stars}
\frac{\tilde A_N}c \, \le \,  \sum_{n=1}^{N-k}\frac{\tilde A_{N-n}}c Q_k(n) \, +\,
\, Q_k(N),
\end{equation}
for $N \ge k$. 
By comparing \eqref{eq:1star} and \eqref{eq:2stars}, and
by using \eqref{eq:3stars} and 
$Q_k(N) \stackrel{N \to \infty} \sim K(N)^\gamma
\bbE[ z_1^\gamma]
\sum_{j=0}^{k-1} A_j$, 
 one directly obtains  \eqref{eq:if2}.
\qed

\subsection{Disorder relevance: sketch of the proof}
Let us consider for instance the case $\ga>1$, which is technically
less involved than the others, but still fully representative of
our strategy.  Take $(\gb,h)$ such that $\gb$ is small and
$h=h_c^{ann}(\gb)+\Delta$, with $\Delta=a\gb^2$. We are therefore
considering the system inside the annealed localized phase, but close
to the annealed critical point (at a distance $\Delta$ from it), and
we want to show that $\tf(\gb,h)=0$. In view of Proposition
\ref{th:deloc}, it is sufficient to show that $\rho$ in
\eqref{eq:if} is sufficiently small, and we have the freedom to choose
a suitable $k$.  Specifically, we choose $k$ to be of the order of the
correlation length of the annealed system:
$k=1/\tf^{ann}(\gb,h)=1/\tf(0,\Delta)\approx \textrm{const.}/(a\gb^2)$, where the
last estimate holds since the phase transition of the annealed system
is first order for $\ga>1$. Note that $k$ diverges for $\gb$ small.

For the purpose of this informal discussion, assume that
$K(n)=c\,n^{-(1+\alpha)}$, {\sl i.e.}, the slowly varying function 
$L(\cdot)$ is constant. The sum over $n$ in the right-hand side 
of \eqref{eq:if}
is then immediately performed
and (up to a  multiplicative constant) one is left with estimating
\begin{equation}
\label{eq:schecci}
  \sum_{j=0}^{k-1}\frac {A_j}{(k-j)^{(1+\ga)\gga-1}}.
\end{equation}

One can choose $\gamma<1$ such $(1+\ga)\gga-1>1$ and it is actually
not difficult to show that $\sup_{j<k}A_j$ is bounded by a constant
uniformly in $k$. On one hand in fact $A_j\le [\bbE
Z_{j,\go}]^\gamma=[Z_j(\Delta)]^\gamma$, where the first step follows
from Jensen's inequality and the second one from the definition of the
model (recall \eqref{eq:Zpure}). On the other hand for $j< k$, {\sl
  i.e.}, for $j$ smaller than the correlation length of the annealed
model, one has that the annealed partition function $Z_{j}(\Delta)$ is
bounded above by a constant, {\sl independently of how small $\Delta$
  is}, {\sl i.e.}, of how large the correlation length is.  This
just establishes that the quantity in \eqref{eq:schecci} is bounded,
so we need to go beyond and show that $A_j$ is small: this of course
is not true unless $j$ is large, but if we restrict the sum in
\eqref{eq:schecci} to $j\ll k$ what we obtain is small, since the
denominator is approximately $k^{(1+\ga)\gamma -1}$, that is $k$ to a power
larger than $1$.

In order to control the terms for which $k-j$ is of order $1$ a new
ingredient is clearly needed, and we really have to estimate the
fractional moment of the partition function without resorting to
Jensen's inequality. To this purpose, we apply an idea which was
introduced in \cite{cf:GLT}. Specifically, we change the law $\bbP$ of
the disorder in such a way that under the new law, $\tilde \bbP$, the
system is delocalized and $\tilde\bbE(Z_{j,\go})^\gamma$ is small.
The change of measure corresponds to tilting negatively the law of
$\go_i, i\le j$, {\sl cf.}  \eqref{eq:tildeP}, so that the system is more
delocalized than under $\bbP$.  The non-trivial fact is that with our
choice $\Delta=a\gb^2$ and $j\le 1/\tf(0,\Delta)$, one can guarantee
on one hand that $Z_{j,\go}$ is typically small under $\tilde\bbP$,
and on the other that $\bbP$ and $\tilde \bbP$ are close (their mutual
density is bounded, in a suitable sense), so that the same statement
about $Z_{j,\go}$ holds also under the original measure $\bbP$. At
this point, we have that all terms in \eqref{eq:schecci} are small:
actually, as we will see, the whole sum is as small as we wish if we
choose $a$ small.  The fact that $\tf(\gb,h)=0$ then follows from
Proposition \ref{th:deloc}.

As we have mentioned above, the case $\ga \in [1/2, 1)$ is not much 
harder, at least on a conceptual level, but this time it is not
sufficient to establish bounds on  $A_j$ that do not depend
on $j$: the exponent in the denominator of the summand in  \eqref{eq:schecci}
is in any case smaller than $1$ and one has to exploit
the decay in $j$ of $A_j$: with respect to the 
 $\ga>1$ case, here one can exploit the decay of  $\bP (j \in \tau)$
 as $j$ grows, while such a quantity converges to a positive constant if
 $\ga >1$. 
Once again the case of $j \ll k$ can be dealt with by direct 
annealed estimates, while when one gets close to $k$ 
a finer argument, direct generalization of the one used for
the $\ga>1$ case, is needed.



\section{The case $\alpha>1$}

In order to avoid repetitions let us establish that, 
in this and next sections, $R_i, i=1,2,\ldots$  denote (large)
constants,  $L_i(\cdot)$ are slowly varying functions and $C_i$ 
positive constants (not necessarily large).

\medskip 

{\sl Proof of Theorem \ref{th:a>1}}. Fix $\beta_0>0$ 
 and let $\beta\le \beta_0$, 
$h=h_c^{ann}(\beta)+ a \beta^2$ and $\gamma<1$ sufficiently close to
$1$ so that
\begin{equation}
\label{eq:alphans}
  (1+\alpha)\gamma>2.
\end{equation}
It is sufficient to show that the sum in \eqref{eq:if} can be made
arbitrarily small (for some suitable choice of $k$) by choosing $a$
small, since $\bbE [z_1^\gamma]$ can be bounded above by a constant
independent of $a$ (for $a$ small).

We choose $k=k(\beta)=1/(a\beta^2)$, so that $\beta=1/\sqrt{a
  k(\beta)}$.  In order to avoid a plethora of $\lfloor\cdot\rfloor$,
we will assume that $k(\beta)$ is integer.  Note that $k(\beta)$ is
large if $\beta$ or $a$ are small.

First of all note that, thanks to Eqs. \eqref{eq:sommasv} and 
\eqref{eq:maxsv}, the sum in the r.h.s. of \eqref{eq:if} is
bounded above by
\begin{equation}
\label{eq:S}
\sum_{j=0}^{k(\gb)-1}
\frac{L_1(k(\gb)-j)\,A_j}{(k(\gb)-j)^{(1+\ga)\gamma-1}}.
\end{equation}

We split this sum as
\begin{equation}
\label{eq:split1}
  S_1+S_2:=  \sum_{j=0}^{k(\gb)-1-R_1}
\frac{L_1(k(\gb)-j)\,A_j}{(k(\gb)-j)^{(1+\ga)\gamma-1}}
+ \sum_{j=k(\gb)-R_1}^{k(\gb)-1}
\frac{L_1(k(\gb)-j)\,A_j}{(k(\gb)-j)^{(1+\ga)\gamma-1}}.
\end{equation}
To estimate $S_1$, note that by Jensen's inequality $A_j\le (\bbE
Z_{j,\go})^\gamma\le C_1$ with $C_1$ a constant independent of $j$ as
long as $j< k(\gb)$.
Indeed, from \eqref{eq:Z} and the definition of the annealed critical
point one sees that (recall \eqref{eq:Zpure})
\begin{equation}
\label{eq:C0}
  \bbE{Z_{j,\go}}=Z_j(a\beta^2)=\bE\left(e^{a\beta^2|\tau\cap\{1,\ldots,j\}|}
\ind_{\{j\in\tau\}}\right),
\end{equation}
and the last term is clearly smaller than $e$.
Therefore, using again \eqref{eq:sommasv}
\begin{equation}
  S_1\le 
\frac{L_2(R_1)}{R_1^{(1+\ga)\gga-2}},
\end{equation}
which can be made small with $R_1$ large in view of the 
choice \eqref{eq:alphans}.  As for $S_2$, one has
\begin{equation}
\label{eq:B2}
  S_2\, \le\, 
C_2\,\max_{k(\gb)-R_1\le j<k(\gb)}A_j.
\end{equation}
We apply now Lemma~\ref{th:bernard} (note also the definition in
 \eqref{eq:tildeP}) with $N=j$ and
$\gl= 1/\sqrt{j}$ 
so that we have 
\begin{equation}
A_j \, \le \, \left[\bbE_{j, 1/\sqrt{j}} \left( Z_{j, \go}\right)\right]^\gamma
\exp\left(c \gamma/(1-\gamma)\right), 
\end{equation}
for $1/\sqrt{j} \le \min (1, (1-\gamma)/\gamma)$, that is for $a$
sufficiently small, since we are in any case assuming $j\ge
k(\gb)-R_1$.

We are therefore left with showing that $ \bbE_{j, 1/\sqrt{j}} \left[
  Z_{j, \go}\right]$ is small for the range of $j$'s we are
  considering.  For such an estimate it is convenient to recall
  \eqref{eq:hann} and to observe that for any given values of $\beta$,
  $ h$ and $\gl$ and for any $j$
\begin{equation}
\label{eq:MMM}
\bbE_{j,\gl}[Z_{j,\go}]=\bE
\left[\left(\exp\left({h-h_c^{ann}(\gb)}\right)
\frac{\M(\gb-\gl)}{\M(\gb)\M(-\gl)}
  \right)^{|\tau\cap\{1,\ldots,j\}|}\;\ind_{\{j\in\tau\}}\right].
\end{equation}
In order to exploit such a formula let us observe that
\begin{equation}
\label{eq:MM}
   \frac{\M(\gb-\gl)}{\M(\gb)\M(-\gl)}=
\exp\left[-\int_0^\gb \dd x\int_{-\gl}^0\dd y \left.\frac{\dd^2}{\dd t^2}
\log \M(t)\right|_{t=x+y}\right]
\, \le\,  e^{-C_3\beta\gl}, 
\end{equation}
which holds for $0 < \gl \le  \gb \le \gb_0$  and $C_3:=
\min_{t \in [-\gb_0, \gb_0]} \dd^2(\log \M (t))/\dd t^2>0$.
  If $a$ is sufficiently small, for
$j \le k(\gb)=1/(a\gb^2)$ we have
\begin{equation}
a\beta^2-\frac{C_3\gb}{\sqrt{j}}\le\frac1{k(\gb)}\left[1-\frac{C_3}{\sqrt
    a}\right] \le -\frac{C_3}{2k(\gb)\sqrt a}.
\end{equation}
As a consequence,
\begin{equation}
\label{eq:law}
\max_{k(\gb)-R_1\le j<k(\gb)}
\bbE_{j, 1/\sqrt{j}}(Z_{j,\go})\,\le\, 
e^{C_3\sqrt a \gb^2 R_1/2}\,
\bE\left[\exp\left(-\frac{C_3}{2\sqrt{a}k(\gb)}
\left|\tau\cap\{1,\ldots,k(\gb)\}\right|
  \right)
\right].
\end{equation}
The right-hand side in  \eqref{eq:law} can be made small
by choosing $a$ small (and this is uniform on $\beta\le \beta_0$) because of
\begin{equation}
  \label{eq:isi}
  \lim_{c\to+\infty}\limsup_{N\to\infty} 
\bE\left(e^{-(c/N)|\tau\cap\{1,\ldots,N\}|}\right)=0,
\end{equation}
that we are going to prove just below.
Putting everything together, we have shown that both $S_1$ and $S_2$
can be made small via a suitable choice of $R_1$ and $a$, and the
theorem is proven.

To prove \eqref{eq:isi}, since the function under
expectation is bounded by $1$ it is sufficient to observe that
\begin{equation}
\frac1N
\sum_{n=1}^N\ind_{\{n\in\tau\}}\stackrel{N\to\infty}\longrightarrow
\frac1{\sum_{n\in\N}n K(n)}=\frac1{\bE(\tau_1)}\, >\,0,
\end{equation}
almost surely (with respect to $\bP$) by the classical Renewal
Theorem (or by the strong law of large numbers).

The claim $h_c(\gb)>h_c^{ann}(\gb)$ for every $\gb$ follows from the 
arbitrariness of $\beta_0$.
\qed

\section{The case $1/2<\alpha<1$}

{\sl Proof of Theorem \ref{th:a121}}.  To make things clear, we fix
now $\gep>0$ small 
and $0<\gga<1$ such that
\begin{equation}
\label{eq:gga1}
\gga\left\{(1+\ga)+(1-\gep^2)\left[1-\alpha+(\gep/2)(\ga-1/2)\right]\right\}
\, > \, 2,
  \end{equation}
  and
  \begin{equation}
   \label{eq:gga2}
 \gga\left[(1+\ga)+(1-\gep^2)(1-\ga)\right]\, >\, 2-\gep^2.
 \end{equation}
Moreover we  take $\gb\le \beta_0$ and
\begin{equation}
\label{eq:h}
h=h_c^{ann}(\gb)+\Delta:=
h_c^{ann}(\gb)+a\gb^{\frac{2\ga}{2\ga-1}(1+\gep)}.
\end{equation}
We notice that it is crucial that $(\ga-1/2)>0$ for \eqref{eq:gga1} to
be satisfied.  We will take $\gep$ sufficiently small (so that
\eqref{eq:gga1} and \eqref{eq:gga2} can occur) {\sl and then, once
  $\gep$ and $\gga$ are fixed,} $a$ also small.  We set moreover
\begin{equation}
  \label{eq:kb}
  k(\gb):= \frac1{\tf(0,\Delta)}
\end{equation}
and we notice that $k(\gb)$ can be made large by choosing $a$
small, uniformly for $\gb\le\beta_0$. As in the previous section, we
assume for ease of notation that $k(\gb)\in\N$ (and we  write just
$k$ for $k(\gb)$). 

Our aim is to show that $\tf(\gb,h)=0$ if $a$ is chosen
sufficiently small in \eqref{eq:h}.  We recall that, thanks to
Proposition \ref{th:deloc}, the result is proven if we show that
\eqref{eq:S} is $o(1)$ for $k$ large.  
In order to estimate this sum, we need a couple of technical
estimates which are proven at the end of this section (Lemma \ref{th:lemmaA})
and in Appendix \ref{sec:Arenew} (Lemma \ref{th:lemmaU2}).
\begin{lemma}
\label{th:lemmaU2} Let $\ga\in(0,1)$.
There exists a constant $C_4$ such that for every
$0<h<1$ and every 
$j\le 1/\tf(0,h)$
  \begin{equation}
     Z_{j}(h)\, \le\, \frac{ C_4}{j^{1-\ga}L(j)}.
  \end{equation}
\end{lemma}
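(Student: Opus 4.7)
The plan is to read off $Z_j(h)$ from a tilted-renewal identity. Set $\lambda := \tf(0,h)$ and $\tilde K(n) := e^{h - \lambda n} K(n)$; the characterization of the free energy --- $\lambda$ is the unique positive root of $\sum_n K(n)e^{h-\lambda n}=1$, equivalently $\sum_n \tilde K(n) = 1$ --- ensures that $\tilde K$ is a probability distribution on $\N$. Let $\tilde\tau$ be the renewal with inter-arrival law $\tilde K$ and write $\tilde u_j := \tilde\bP(j \in \tilde\tau)$. Expanding as in \eqref{eq:Z+} with $\gb = 0$ and telescoping $\prod_{k=1}^\ell e^{h - \lambda(i_k - i_{k-1})} = e^{h\ell - \lambda j}$ gives
$$Z_j(h) \;=\; e^{\lambda j}\, \tilde u_j.$$
Under the hypothesis $j \le 1/\lambda$ one has $e^{\lambda j} \le e$, so the lemma reduces to the uniform renewal estimate $\tilde u_j \le C/(j^{1-\alpha} L(j))$ valid for $j \le 1/\lambda$ and $h \in (0,1)$.

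The heuristic for this bound is transparent: $\tilde K$ inherits the regular variation of $K$ on scales $n \lesssim 1/\lambda$ (there $e^{h-\lambda n}$ is pinched between two positive absolute constants) and decays at least exponentially beyond. Hence $\tilde\tau$ has finite mean $\tilde\mu$; a Karamata computation yields $\tilde\mu \asymp L(1/\lambda)/\lambda^{1-\alpha}$, and at the boundary $j \sim 1/\lambda$ the quantity $1/\tilde\mu$ is precisely of the target order $1/(j^{1-\alpha}L(j))$. Two complementary regimes fit together to give the bound: for $j$ of the same order as $1/\lambda$ the classical Renewal Theorem gives $\tilde u_j \sim 1/\tilde\mu$; for $j \ll 1/\lambda$ the exponential cutoff in $\tilde K$ is essentially inactive and $\tilde u_j$ behaves like the untilted renewal mass function $u_j \sim c_\alpha/(j^{1-\alpha} L(j))$ supplied by Doney's theorem (valid thanks to $\alpha \in (0,1)$).

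To stitch the two regimes into a single uniform bound I would go through an auxiliary truncated renewal $\bar\tau$ with inter-arrival law $\bar K(n) \propto K(n)\ind_{n \le 1/\lambda}$. On $\{1,\ldots,\lfloor 1/\lambda\rfloor\}$ one has $\tilde K \le c\, \bar K$ pointwise with an absolute constant, and this estimate fed into the renewal equation $\tilde u_j = \sum_n \tilde K(n)\tilde u_{j-n}$ ought to give $\tilde u_j \le C \bar u_j$ for $j \le 1/\lambda$. It then remains to prove a regularly-varying renewal bound for $\bar u_j$ with a constant independent of the truncation scale, namely $\bar u_j \le C'/(j^{1-\alpha}L(j))$ for $j \le 1/\lambda$. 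This uniform estimate is the technical content carried out in Appendix~\ref{sec:Arenew}.

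The main obstacle is precisely that last step. A naive pointwise comparison $\tilde K \le e\, K$ does not survive the convolution structure of the renewal function, since the number of renewal epochs can be of order $j$; and the classical Renewal Theorem only yields $\tilde u_j \to 1/\tilde\mu$ with an error term that degenerates as $\lambda \to 0$. Doney's sharp asymptotic refers to the un-truncated regularly varying renewal and cannot be applied off-the-shelf to $\bar\tau$. Extracting a clean bound with a constant uniform as the truncation scale $1/\lambda \to \infty$ requires a genuine regularly-varying local-renewal argument, which is the short but delicate piece hidden behind the concise statement of the lemma.
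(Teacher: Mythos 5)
Your reduction $Z_j(h)=e^{\lambda j}\tilde u_j$ with $\lambda=\tf(0,h)$ and $\tilde K(n)=e^{h-\lambda n}K(n)$ is correct and standard, and the observation that $e^{\lambda j}\le e$ for $j\le 1/\lambda$ cleanly isolates the issue: a bound $\tilde u_j\le C/(j^{1-\alpha}L(j))$ uniform over $0<h<1$ and $j\le 1/\lambda$. But this is precisely where the proposal stops being a proof. As you yourself note, the pointwise comparison $\tilde K\le c\bar K$ does not propagate through the renewal convolutions (the number of renewal epochs up to $j$ can be of order $j$, and even a constant $c$ close to $1$ does damage after $O(j)$ convolutions), Doney's asymptotic \eqref{eq:DoneyB} cannot be applied to a truncated or $\lambda$-tilted chain with a constant uniform in the truncation scale, and the classical Renewal Theorem only controls $\tilde u_j$ near $j\sim 1/\lambda$ with error terms that degenerate as $\lambda\to 0$. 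Your closing paragraph is honest that the "genuine regularly-varying local-renewal argument" is missing, and your pointer to Appendix~\ref{sec:Arenew} as where that estimate lives is unfortunately not right: the appendix contains a different argument, not the uniform tilted-renewal bound your route requires.

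The paper avoids the tilted-renewal route entirely. It factorizes $Z_j(h)=\bP(j\in\tau)\,\bE\bigl[e^{h|\tau\cap\{1,\ldots,j\}|}\bigm|j\in\tau\bigr]$, gets the $j^{\alpha-1}/L(j)$ decay from $\bP(j\in\tau)$ via \eqref{eq:DoneyB}, and then shows the conditional factor is bounded by a constant uniformly in $j$ and $h$. That boundedness uses Theorem~\ref{th:pure}~(2) to trade the constraint $j\le 1/\tf(0,h)$ for $h\le c\,L(j)/j^\alpha$, a Cauchy–Schwarz step to pass from $\{1,\ldots,j\}$ to $\{1,\ldots,j/2\}$, a decomposition over the last renewal epoch $X_j\le j/2$ together with the two-sided comparison \eqref{eq:tobesplit}--\eqref{eq:term1} to remove the conditioning on $\{j\in\tau\}$, and finally a Markov-inequality/Laplace-transform tail bound \eqref{eq:withoutc} built on $-\log\bE[e^{-\lambda\tau_1}]\sim c_\alpha\lambda^\alpha L(1/\lambda)$. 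None of this requires a uniform local estimate for a $\lambda$-dependent renewal, which is exactly the obstacle you could not clear. If you want to salvage your route you would need a genuinely uniform version of the Garsia--Lamperti/Doney local theorem for defective or tilted regularly varying renewals, which is more than the references cited in the paper provide; the paper's conditional-expectation argument is the shorter path.
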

In view of $Z_j(h_c(0))=Z_j(0)=\bP(j\in\tau)$ and \eqref{eq:DoneyB},
this means that as long as $j\le 1/\tf(0,h)$ the partition function of
the homogeneous model behaves essentially like in the (homogeneous)
critical case.  \medskip

\begin{lemma}
\label{th:lemmaA}
  There exists $\gep_0>0$ such that, if $\gep\le \gep_0$ ($\gep$ 
being the same one which appears in \eqref{eq:h}),
  \begin{equation}
    \bbE_{j,1/\sqrt{j}}[Z_{j,\go}]\le
    \frac{C_5}{j^{1-\ga+(\gep/2)(\ga-1/2)}}
  \end{equation}
  for some constant $C_5$ (depending on $\gep$ but not on $\gb$ or $a$),
  uniformly in $0\le \gb\le\gb_0$ and in $k^{(1-\gep^2)}\le j<k$.
\end{lemma}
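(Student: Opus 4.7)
The plan is to exploit \eqref{eq:MMM} together with the bound \eqref{eq:MM}, applied with $\gl=1/\sqrt j$ and $h-h_c^{ann}(\gb)=\Delta=a\gb^{2\ga(1+\gep)/(2\ga-1)}$, to obtain
\begin{equation*}
\bbE_{j,1/\sqrt j}[Z_{j,\go}]\;\le\;\bE\!\left[\exp\!\left(\left(\Delta-\frac{C_3\gb}{\sqrt j}\right)N_j\right)\ind_{\{j\in\tau\}}\right],\qquad N_j:=|\tau\cap\{1,\ldots,j\}|.
\end{equation*}
Using $k=1/\tf(0,\Delta)$ together with $\tf(0,\Delta)\asymp\Delta^{1/\ga}$ up to slowly varying factors (Theorem~\ref{th:pure}(2)), a direct algebraic computation gives that for $j\in[k^{1-\gep^2},k]$ the ratio $\Delta/(\gb/\sqrt j)$ is dominated by a strictly negative power of $1/\gb$, crucially because $\ga-1/2>0$; thus it tends to $0$ as $\gb\downarrow 0$. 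Hence, for $a$ small enough, $\Delta-C_3\gb/\sqrt j\le-\mu_j$ with $\mu_j:=(C_3/2)\gb/\sqrt j$, and the task reduces to showing
\begin{equation*}
Z_j(-\mu_j)\;:=\;\bE\!\left[e^{-\mu_j N_j}\ind_{\{j\in\tau\}}\right]\;\le\;\frac{C_5}{j^{\,1-\ga+(\gep/2)(\ga-1/2)}}.
\end{equation*}

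The key identity $Z_j(-\mu_j)=\sum_{\ell\ge 1}e^{-\mu_j\ell}K^{*\ell}(j)$ exhibits $Z_j(-\mu_j)$ as $\bP(j\in\tilde\tau)$, where $\tilde\tau$ is the defective renewal with sub-probability inter-arrival law $\tilde K(n):=e^{-\mu_j}K(n)$ of total mass $e^{-\mu_j}$. The terminating-renewal asymptotic \eqref{eq:3stars} applied to $\tilde K$ then delivers
\begin{equation*}
Z_j(-\mu_j)\;\le\;\frac{C\,e^{-\mu_j}K(j)}{(1-e^{-\mu_j})^2}\;\le\;\frac{C'\,L(j)}{\gb^{2}\,j^{\ga}},
\end{equation*}
using $K(j)=L(j)/j^{1+\ga}$ and $(1-e^{-\mu_j})^2\asymp \mu_j^2\asymp\gb^2/j$ for $\mu_j$ small. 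To conclude, one checks $L(j)/(\gb^2 j^\ga)\le C_5/j^{\,1-\ga+(\gep/2)(\ga-1/2)}$; since $(\ga-1/2)/(2\ga-1)=1/2$ this is the same as $L(j)\gb^{-2}j^{-(2\ga-1)(1-\gep/4)}\le C_5$. Inserting $j\ge k^{1-\gep^2}$ and $k\sim\gb^{-2(1+\gep)/(2\ga-1)}$ (up to slowly varying), and expanding $2(1+\gep)(1-\gep^2)(1-\gep/4)=2+3\gep/2+O(\gep^2)$, the effective exponent of $\gb$ on the left-hand side is $3\gep/2+O(\gep^2)$, which is strictly positive for $\gep$ small; slow variation of $L$ then yields the bound uniformly for $\gb\le\gb_0$.

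The main obstacle is that \eqref{eq:3stars} is stated for a \emph{fixed} sub-probability, whereas here the sub-probability $\tilde K$ depends on $j$ through $\mu_j=C_3\gb/(2\sqrt j)$. What is needed is a version of the asymptotic uniform in $\mu_j$, valid as long as $j$ is deep in the ``tail regime'' $\mu_j j^\ga/L(j)\to\infty$ for the defective renewal. In our setting a direct computation shows that this quantity is at least of order $\gb^{-\gep(1+O(\gep))}/L(k)$ throughout $j\in[k^{1-\gep^2},k]$, which does diverge as $\gb\downarrow 0$, so the required regime is in force. The uniformity itself can either be quoted from sharp results on defective renewals with regularly varying tails ({\sl cf.}\ \cite{cf:GB} and references therein), or proved by hand by splitting $\sum_\ell e^{-\mu_j\ell}K^{*\ell}(j)$ at $\ell^\star=c\,j^\ga/L(j)$: for $\ell\le\ell^\star$ the local-limit bound $K^{*\ell}(j)\le C\ell\,K(j)$ combined with $\sum_\ell \ell e^{-\mu_j\ell}\asymp 1/\mu_j^2$ reproduces the $K(j)/\mu_j^2$ term, while for $\ell>\ell^\star$ the factor $e^{-\mu_j\ell}$ is super-polynomially small in $1/\gb$ and is harmlessly absorbed into $C_5$.
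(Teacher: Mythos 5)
Your overall scheme is the paper's: tilt the measure via \eqref{eq:MMM}--\eqref{eq:MM} to reduce the bound to $Z_j(-\mu_j)$ with $\mu_j\asymp\gb/\sqrt j$, show $\mu_j$ is much larger than $\Delta$ on the range $j\in[k^{1-\gep^2},k]$, and then bound $Z_j(-\mu_j)$ by a quantity of order $K(j)/\mu_j^2$; your final exponent bookkeeping also checks out. The one place you diverge is in how you justify the estimate $Z_j(-\mu_j)\lesssim K(j)/\mu_j^2$ uniformly in the relevant range of $(\gb,j)$. You correctly single out that the naive application of the terminating-renewal asymptotic \eqref{eq:3stars} is not legitimate because the defect parameter $\mu_j$ varies with $j$, and you propose either to quote a uniform result or to reprove it by hand. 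What you seem not to have noticed is that the paper supplies exactly the uniform version you need as Proposition~\ref{th:homog} (the condition $r(N)\to\infty$, $r(N)L(N)/N^\ga\to 0$ there is precisely your ``tail regime'' $\mu_j\to 0$, $\mu_j j^\ga/L(j)\to\infty$), and the paper's proof of Lemma~\ref{th:lemmaA} simply invokes it.

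Your ``by-hand'' sketch also has a small soft spot worth fixing. You cut the sum $\sum_\ell e^{-\mu_j\ell}K^{*\ell}(j)$ at $\ell^\star = c\,j^\ga/L(j)$, which sits at the very edge of the local-large-deviation regime: at that scale $j/a(\ell^\star)$ is $O(1)$, so the Doney estimate \eqref{eq:fD} does not give a small relative error there, and the uniform inequality $K^{*\ell}(j)\le C\,\ell\,K(j)$ for $\ell$ up to $\ell^\star$ needs a separate (true but not immediate) argument. The paper's Proposition~\ref{th:homog} sidesteps this by cutting deeper in the tail, at $\ell^{\star\star}= j^\ga/\bigl(L(j)\sqrt{r(j)}\bigr)$ with $r(j)=\mu_j j^\ga/L(j)\to\infty$; there $j/a(\ell^{\star\star})$ diverges, Doney applies with $o(1)$ error, and the discarded tail $\ell>\ell^{\star\star}$ is still controlled by $e^{-\mu_j\ell^{\star\star}}=e^{-\sqrt{r(j)}}$. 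Since you have already verified that $\mu_j j^\ga/L(j)$ diverges on $[k^{1-\gep^2},k]$, moving your cut inward in this way closes the gap without any new input, so your argument is correct once this adjustment is made; but the cleanest route is to cite Proposition~\ref{th:homog} directly, as the paper does.
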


\medskip

In order to bound above \eqref{eq:S}, we split it as
\begin{equation}
  S_3+S_4:=\sum_{j=0}^{\lfloor k^{(1-\gep^2)}\rfloor}
  \frac{L_1(k-j)\,A_j}{(k-j)^{(1+\ga)\gamma-1}}+
\sum_{j=\lfloor k^{(1-\gep^2)}\rfloor+1}^{k-1}
  \frac{L_1(k-j)\,A_j}{(k-j)^{(1+\ga)\gamma-1}}.
\end{equation}
For $S_3$ we use simply $A_j\le (\bbE
Z_{j,\go})^\gamma=[Z_j(\Delta)]^\gga$ and Lemma \ref{th:lemmaU2}, 
together with \eqref{eq:sommasv} and \eqref{eq:maxsv}:
\begin{equation}
\label{eq:S3}
  S_3\le \frac{L_3(k)}{k^{[(1+\ga)\gamma-1]}}\frac1{
k^{(1-\gep^2)((1-\ga)\gamma-1)}},
\end{equation}
where $L_3(\cdot)$ can depend on $\gep$ but not on $a$.  The second
condition \eqref{eq:gga2} imposed on $\gga$ guarantees that $S_3$ is
arbitrarily small for $k$ large, {\sl i.e.}, for $a$ small.

As for $S_4$, we use Lemma \ref{th:bernard} with $N=j$ and
$\gl=1/\sqrt{j}$ to estimate $A_j$ (recall the definition in
\eqref{eq:tildeP}). We get
\begin{align}
  A_j\le\left[\bbE_{j,1/\sqrt{j}}(Z_{j,\go})\right]^{\gga}\exp(c\gga/(1-\gga)),
  \label{eq:esti}
\end{align}
provided that 
$1/\sqrt j\le \min(1,(1-\gga)/\gga)$, which is true for all
$j\ge k^{1-\gep^2}$ if $a$ is small.  Then, provided we have chosen
$\gep\le \gep_0$, Lemma \ref{th:lemmaA} gives for every
$k^{(1-\gep^2)}< j<k$,
\begin{equation}
   A_j\le  \frac{C_{6}}{j^{[1-\ga+(\gep/2)(\ga-1/2)]\gamma}}.
\end{equation}
Note that $C_6$ is large for $\gep$ small (since from
\eqref{eq:gga1}-\eqref{eq:gga2} it is clear that $\gga$ must be close to 
$1$ for $\gep$ small) but it is independent of $a$. As a consequence,
using \eqref{eq:sommasv2},
\begin{equation}
\begin{split}
  S_4&\, \le\, \max_{k^{(1-\gep^2)}\le j<k}A_j\times
\sum_{r=1}^{k}\frac{L_1(r)}{r^{(1+\ga)\gga-1}}\le
\max_{k^{(1-\gep^2)}\le j<k}A_j\times
\frac{L_4(k)}{k^{(1+\ga)\gga-2}}\\
&\, \le\, C_{6}\,L_4(k)\,k^{2-(1+\ga)\gga-
(1-\gep^2)[1-\ga+(\gep/2)(\ga-1/2)]\gga}.
\end{split}
\end{equation}
Then, the first condition \eqref{eq:gga1} imposed on $\gga$  guarantees
that $S_4$ tends to zero when $k$ tends to infinity. 
\qed

\bigskip

{\sl Proof of Lemma \ref{th:lemmaA}.}  Using \eqref{eq:MMM} together
with the observation \eqref{eq:MM}, the definition of $\Delta$ and
of $k=k(\gb)$ in terms of $\tf(0,\Delta)$ (plus the 
behavior of $\tf(0,\Delta)$ for $\Delta$ small described
in Theorem \ref{th:pure} (2)) one sees that for $j\le k(\gb)$
\begin{equation}
\label{eq:sole11}
  \bbE_{j,1/\sqrt{j}}[Z_{j,\go}]\le \bE\left(
e^{-C_{7}\frac{\gb}{\sqrt j}|\tau\cap
\{1,\ldots,j\}|}\,\ind_{\{j\in\tau\}}\right),
\end{equation}
uniformly for $0\le \gb\le\beta_0$. If moreover $j\ge k^{(1-\gep^2)}$
one has
\begin{equation}
\frac{\gb}{\sqrt j}\ge \frac{C_{8}}{j^{1/2+(\ga-1/2)(1+2
\gep^2)/(1+\gep)}}\ge  \frac{C_{8}}{j^{\ga-(\gep/2)(\ga-1/2)}},
\end{equation}
with $C_{8}$ independent of $a$ for $a$ small.
The condition that $\gep$ is small has been used, say, to neglect 
$\gep^2$ with respect to $\gep$.
Going back to \eqref{eq:sole11} and using Proposition \ref{th:homog}
one has then
\begin{equation}
  \bbE_{j,1/\sqrt{j}}[Z_{j,\go}]\le 
  \frac{C_{9}}{j^{1-\ga+(\gep/2)(\ga-1/2)}}.
\end{equation}
with $C_{9}$ depending on $\gep$ but not on $a$.
\qed

\section{The case $\ga=1/2$}

{\sl Proof of Theorem \ref{th:a12}.}  The proof is not conceptually
different from that of Theorem \ref{th:a121}, but here we have to
carefully keep track of the slowly varying functions, and we have to
choose $\gamma(<1)$ as a function of $k$.  Under our assumption
\eqref{eq:Lpiccola} on $L(\cdot)$, it is easy to deduce from Theorem
\ref{th:pure} (2) that (say, for $0<\Delta<1$)
 \begin{equation}
 \label{eq:F1/2}
   \tf(0,\Delta)=\Delta^2\hat L(1/\Delta)\ge C(c,\eta)\Delta^2\,|\log 
   \Delta|^{2\eta}.
 \end{equation}
We take $\gb\le\gb_0$ and 
\begin{equation}
\label{eq:D12}
h\, =\, h_c^{ann}(\gb)+\Delta\, :=\,  h_c^{ann}(\gb)+a
\exp\left(-\gb^{-1/(\eta-1/2-\gep)}\right),
\end{equation}
 and, as in last section, $k=1/\tf(0,\Delta)=\Delta^{-2}/\hat L(1/\Delta)$.
We note also that (for $a<1$) 
\begin{equation}
\label{eq:gbge}
  \gb\ge |\log \Delta|^{-\eta+1/2+\gep}.
\end{equation}
We set $\gga=\gga(k)=1-1/(\log k)$. As $\gga$ is $k$--dependent one
cannot use \eqref{eq:sommasv} and \eqref{eq:maxsv} without care to pass 
from \eqref{eq:if} to \eqref{eq:S}, since one could in principle
have  $\gamma$-dependent (and therefore $k$-dependent) constants
in front.
Therefore, our first aim will be to (partly) get rid of $\gga$ in
\eqref{eq:if}. We notice that for any $j\le k-1$, for $k$ such that
$\gga(k)\ge 5/6$,
\begin{align}
\label{eq:tispiezzoindue}
  \sum_{n=k}^{\infty}K(n-j)^{\gamma}&\le
  \sum_{n=k-j}^{k^6}K(n)\exp\left[(3/2\log n-\log L(n))/\log
    k\right]+\sum_{n=k^6+1}^{\infty}[K(n)]^{5/6}.
\end{align}
Now, properties of slowly varying functions guarantee that the quantity
in the exponential in the first sum is bounded (uniformly in $j$ and
$k$). As for the second sum, \eqref{eq:sommasv} guarantees it is
smaller than $k^{-6/5}$ for $k$ large. Since by Lemma \ref{th:lemmaU2}
the $A_j$ are bounded by a constant in the regime we are considering,
when we reinsert this term in \eqref{eq:if} and we sum over $j<k$ we
obtain a contribution which vanishes at least like $k^{-1/5}$ for
$k\to\infty$. We will therefore forget from now on the second sum in
\eqref{eq:tispiezzoindue}.

Therefore one has
\begin{align}
\rho  \le C_{10}\sum_{n=k}^{\infty}\sum_{j=0}^{k-1}K(n-j)A_j
      \le C_{11}\sum_{j=0}^{k-1}\frac{L(k-j)A_j}{(k-j)^{1/2}},
\end{align}     
where we have safely used \eqref{eq:sommasv} to get the second expression
and now $\gamma$ appears only (implicitly) in the fractional moment
$A_j$ but not in the constants $C_i$.

Once again, it is convenient to split this sum into
\begin{equation} 
  S_5+S_6:=\sum_{j=0}^{k/R_2}\frac{A_j\, L(k-j)}{ (k-j)^{1/2}}
+\sum_{j=(k/R_2)+1}^{k-1}\frac{A_j\,L(k-j)}{(k-j)^{1/2}},
\end{equation}
with $R_2$ a large constant.  To bound $S_5$ we simply use Jensen
inequality to estimate $A_j$. Lemma \ref{th:lemmaU2} gives that for
all $j\le k$,
\begin{align}
A_j\le \frac{C_{12}}{j^{\gga/2}L(j)^\gga}\le \frac{C_{13}}{\sqrt j L(j)},
\end{align}
where the second inequality comes from our choice $\gamma=1-1/(\log
k)$. Knowing this, we can use \eqref{eq:sommasv} to compute $S_5$ and
get
 \begin{equation}
\label{eq:regvar}
S_5\le \frac{C_{14}}{\sqrt{R_2}}\frac{L(k(1-1/R_2))}
{L(k/R_2)}.
\end{equation}
We see that $S_5$ can be made small
choosing $R_2$ large. It is important for the following to note that
it is sufficient to choose $R_2$ large but independent of $k$; in
particular, for $k$ large at $R_2$ fixed the last factor in
\eqref{eq:regvar} approaches $1$ by the property of slow variation of
$L(\cdot)$. As for $S_6$,
\begin{equation}
  S_6\le C_{15}\max_{k/R_2< j<k}A_j\times \sqrt k\,L(k).
\end{equation}
In order to estimate this maximum, we need to refine Lemma \ref{th:lemmaA}:
\begin{lemma}
  \label{th:lemmaa12} 
There exists a constant
$C_{16}:=C_{16}(R_2)$ such that for  $\gamma=1-1/(\log k)$ and $k/R_2< j<k$
  \begin{equation}
    A_j\le C_{16}\left(L(j)\sqrt j\,(\log j)^{2\gep}\right)^{-1}.
  \end{equation}
\end{lemma}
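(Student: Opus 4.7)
{\it Proof plan of Lemma \ref{th:lemmaa12}.}
The strategy is to follow Lemma \ref{th:lemmaA}, but with two modifications reflecting the fact that $\gamma=1-1/\log k$ is now close to $1$ as $k$ grows. The first step is to apply Lemma \ref{th:bernard} to pass from $A_j=\bbE[Z_{j,\omega}^{\gamma}]$ to a fractional moment under a tilted measure $\bbE_{j,\lambda}$. Since the Radon--Nikodym cost in that lemma is of the form $\exp(CN\lambda^2\gamma/(1-\gamma))$, the choice $\lambda=1/\sqrt{j}$ (as in Lemma \ref{th:lemmaA}) would produce a cost of order $k^{C}$, which is unusable. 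The natural remedy is to take the smaller tilt
\[
\lambda_j\;:=\;\frac{1}{\sqrt{j\log k}},
\]
for which $N\lambda_j^2/(1-\gamma)=(1/\log k)\cdot\log k=1$ and the cost is $O(1)$ uniformly in $k$ and in $j\in(k/R_2,k)$; the admissibility condition $\lambda_j\le(1-\gamma)/\gamma=1/(\log k-1)$ holds because $\sqrt{j\log k}\gg\log k$ in that range.

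Then the plan is to estimate $c_j:=C\gb\lambda_j-\Delta$ from below using the specific scaling \eqref{eq:D12} of $\Delta$ together with $k=1/\tf(0,\Delta)$. From \eqref{eq:F1/2} one gets $\sqrt{k\log k}\le C/\bigl(\Delta|\log\Delta|^{\eta-1/2}\bigr)$, while \eqref{eq:gbge} yields $\gb\ge|\log\Delta|^{-(\eta-1/2-\gep)}$; multiplying these two bounds I expect to find
\[
\frac{\gb}{\sqrt{j\log k}}\;\ge\;\frac{\gb}{\sqrt{k\log k}}\;\ge\;c_1\,\Delta\,|\log\Delta|^{\gep}\;\gg\;\Delta,
\]
uniformly for $j\in(k/R_2,k)$ and $\Delta$ small, so that $c_j\ge c_2\,\Delta|\log\Delta|^{\gep}\asymp\Delta(\log j)^{\gep}$ on the same range (since $\log j\sim\log k\sim 2|\log\Delta|$).

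With this effective parameter in hand, I would combine \eqref{eq:MMM}--\eqref{eq:MM} with Proposition \ref{th:homog} to bound
\[
\bbE_{j,\lambda_j}[Z_{j,\omega}]\;\le\;\bE\!\left[e^{-c_j|\tau\cap\{1,\ldots,j\}|}\,\ind_{\{j\in\tau\}}\right]
\;\le\;\frac{C}{L(j)\sqrt{j}\,(\log j)^{2\gep}}.
\]
The polylogarithmic gain $(\log j)^{-2\gep}$ over the free renewal mass $\bP(j\in\tau)\asymp 1/(L(j)\sqrt{j})$ should come comfortably, since the dimensionless parameter $c_j\sqrt{j}/L(j)$ is of order $(\log j)^{\gep}$: here one uses \eqref{eq:Lpiccola} in the form $L(n)\le c(\log n)^{-\eta}$, together with $\Delta\sqrt{j}\asymp|\log\Delta|^{-\eta}$ obtained from $k=1/\tf(0,\Delta)$ and \eqref{eq:F1/2}. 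Raising to the power $\gamma=1-1/\log k$ costs only an $O(1)$ factor because $(L(j)\sqrt{j}(\log j)^{2\gep})^{1-\gamma}=O(1)$ in the range $j\asymp k$; combined with the bounded Lemma \ref{th:bernard} cost this yields the claim, with a constant depending on $R_2$.

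The main technical obstacle is the joint tuning of $\gamma(k)$, $\lambda_j$, and the super-polynomially small $\Delta(\gb)$ coming from \eqref{eq:D12}: $c_j\sqrt{j}/L(j)$ must diverge with $k$ for Proposition \ref{th:homog} to produce the polylogarithmic gain, yet the Radon--Nikodym cost must stay bounded and the factor $(\cdot)^{1-\gamma}$ must not spoil the renewal estimate. The assumption $\eta>1/2+\gep$ in \eqref{eq:Lpiccola} is precisely what makes the polylog exponent in $c_j$ strictly positive, and is therefore essential.
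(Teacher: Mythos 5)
Your proposal is correct and follows essentially the same route as the paper's own proof: apply Lemma \ref{th:bernard} with a tilt $\lambda_j$ of order $1/\sqrt{j\log j}$ (you use $1/\sqrt{j\log k}$, which is equivalent for $j\in(k/R_2,k)$ and keeps the Radon--Nikodym cost $O(1)$ given $\gamma=1-1/\log k$), then combine \eqref{eq:MMM}--\eqref{eq:MM} with \eqref{eq:F1/2}, \eqref{eq:gbge} and \eqref{eq:Lpiccola} to show the effective negative tilt is at least a constant times $L(j)j^{-1/2}(\log j)^{\gep}$, and finally invoke Proposition \ref{th:homog} to get the extra $(\log j)^{-2\gep}$ factor. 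The intermediate bookkeeping in terms of $\Delta|\log\Delta|^{\gep}$ is a harmless reparametrization of what the paper does directly with $\gb(j\log j)^{-1/2}$.
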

Given this, we obtain immediately
\begin{equation}
  S_6 \, \le\,  C_{17}(R_2)\left[\log
\left(\frac k{R_2}\right)\right]^{
-2\gep}.
\end{equation}
It is then clear that $S_6$ can be made
arbitrarily small with $k$ large, {\sl i.e.}, with $a$ small.
\qed

\medskip

{\sl Proof of Lemma \ref{th:lemmaa12}.}
Once again, we use Lemma \ref{th:bernard} with $N=j$ but this time $\gl=(j\log
j)^{-1/2}$. Recalling that $\gga=1-1/(\log k)$  we obtain
\begin{align}
  A_j\le \left[\bbE_{j,(j\log
      j)^{-1/2}}(Z_{j,\go})\right]^\gga\exp\left(c\frac{\log k}{\log
      j}\right), \label{eq:undemi}
\end{align}
for all $j$ such that $(j\log j)^{1/2} \ge \log k$. The latter
condition is satisfied for all $k/R_2<j<k$ if $k$ is large enough.
Note that, since $j>k/R_2$, the exponential factor in \eqref{eq:undemi} is
bounded by a constant 
$C_{18}:=C_{18}(R_2)$.

Furthermore, for $j\le k$, Eqs. \eqref{eq:MMM}, \eqref{eq:MM} combined give
\begin{align}\label{eq:ss}
\bbE_{j,(j\log j)^{-1/2}}[Z_{j,\go}]\le Z_j\left(-C_{19}\gb(j\log j)^{-1/2}\right),
\end{align}
for some positive constant $C_{19}$, provided $a$ is small (here we
have used \eqref{eq:F1/2} and the definition $k=1/\tf(0,\gD)$).

In view of $j\ge k/R_2$, the definition of $k$ in terms of $\gb$ and 
assumption \eqref{eq:Lpiccola},
we see that 
\begin{equation}
\gb\ge C_{20}(\log j)^{(-\eta+1/2+\gep)}
\ge \frac {C_{21}}c L(j)(\log j)^{1/2+\gep},
\end{equation} 
so that the r.h.s. of \eqref{eq:ss} is bounded above by
\begin{equation}
  Z_j\left(-C_{21}\frac{L(j)}{c\sqrt{j}}(\log j)^{\gep}\right)
\le C_{22}\frac{(\log j)^{-2\gep}}{L(j)\,\sqrt j} ,
\end{equation}
where in the last inequality we used Lemma \ref{th:homog}. The result
is obtained by re-injecting this in \eqref{eq:undemi}, and using the
value of $\gamma(k)$.

\qed

\appendix

\section{Frequently used bounds}
\label{sec:bounds}

\subsection{Bounding the partition function via tilting}
For $\gl \in \R$ and $N \in \N$ consider the probability measure
$\bbP_{N, \gl}$ defined by 
\begin{equation}
\label{eq:tildeP}
\frac{
\dd \bbP_{N, \gl}}{\dd \bbP} (\go)\, =\,
\frac1 {\M(-\gl)^N} 
\exp\left(-\gl \sum_{i=1}^N \go_i \right),
\end{equation} 
where $\M(\cdot)$ was defined in \eqref{eq:M}.
Note that under $\bbP_{N,\gl}$ the random variables $\go_i$ are
still independent but no longer identically distributed: the law of
$\go_i, i\le N$ is tilted while $\go_i, i>N$ are distributed exactly
like under $\bbP$.

\medskip

\begin{lemma}
\label{th:bernard}
There exists $c>0$ such that, for every $N \in \N $ and $\gamma \in
(0,1)$,
\begin{equation}
\label{eq:bernard}
\bbE \left[ \left(Z_{N, \go}\right)^\gamma 
\right]\, \le \,
\left[\bbE_{N , \gl} \left(Z_{N, \go}\right)
\right]^\gamma
\, \exp \left( c \left( \frac{\gga}{1-\gamma} \right)\gl ^2 N
\right),
\end{equation}
for $\vert \gl \vert \le \min(1,(1-\gamma)/\gamma)$. 
\end{lemma}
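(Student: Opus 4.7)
The natural approach is a change-of-measure plus H\"older argument, which is standard for translating fractional moment bounds under the original law into first-moment bounds under the tilted law. The whole identity $\bbE[(Z_{N,\go})^\gamma]=\bbE_{N,\gl}\bigl[(\dd\bbP/\dd\bbP_{N,\gl})(Z_{N,\go})^\gamma\bigr]$ sets the stage: we apply H\"older under $\bbP_{N,\gl}$ with conjugate exponents $p=1/\gamma$ and $q=1/(1-\gamma)$, splitting the integrand as $(Z_{N,\go})^\gamma$ times $\dd\bbP/\dd\bbP_{N,\gl}$. This gives
\[
\bbE\bigl[(Z_{N,\go})^\gamma\bigr]\,\le\,\bigl[\bbE_{N,\gl}(Z_{N,\go})\bigr]^\gamma\,\Bigl[\bbE_{N,\gl}\bigl((\dd\bbP/\dd\bbP_{N,\gl})^{1/(1-\gamma)}\bigr)\Bigr]^{1-\gamma},
\]
so the entire matter reduces to controlling the last factor by $\exp(c\,\gamma\gl^2 N/(1-\gamma))$.

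The second step is an explicit computation. From \eqref{eq:tildeP} we have $\dd\bbP/\dd\bbP_{N,\gl}(\go)=\M(-\gl)^{N}\exp(\gl\sum_{i=1}^{N}\go_i)$, and under $\bbP_{N,\gl}$ the $\go_i$'s (for $i\le N$) are IID with density proportional to $\exp(-\gl\go_1)$. A direct computation of the moment generating function under the tilted law yields $\bbE_{N,\gl}[\exp(t\go_i)]=\M(t-\gl)/\M(-\gl)$, so that with $t=\gl/(1-\gamma)$ one obtains by independence
\[
\bbE_{N,\gl}\bigl[(\dd\bbP/\dd\bbP_{N,\gl})^{1/(1-\gamma)}\bigr]\,=\,\M(-\gl)^{N\gamma/(1-\gamma)}\,\M\bigl(\gl\gamma/(1-\gamma)\bigr)^{N}.
\]
Raising to the power $1-\gamma$ leaves the combined exponent $N\gamma\log\M(-\gl)+N(1-\gamma)\log\M(\gl\gamma/(1-\gamma))$.

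Finally, I would use the Taylor expansion $\log\M(t)=\tfrac12 t^{2}+O(t^{3})$ (valid since $\bbE[\go_1]=0$, $\bbE[\go_1^{2}]=1$, and $\M$ has finite exponential moments) to obtain to leading order
\[
N\gamma\,\tfrac{\gl^{2}}{2}\,+\,N(1-\gamma)\,\tfrac{1}{2}\Bigl(\tfrac{\gl\gamma}{1-\gamma}\Bigr)^{2}\,=\,\tfrac{N\gamma\gl^{2}}{2(1-\gamma)},
\]
which has precisely the claimed form. The constraint $|\gl|\le\min(1,(1-\gamma)/\gamma)$ is exactly what is needed to guarantee that \emph{both} arguments $-\gl$ and $\gl\gamma/(1-\gamma)$ stay bounded (in absolute value by $1$), so that the cubic remainder in the Taylor expansion is controlled uniformly in $\gl$ and $\gamma$ by a constant $c$ depending only on the law of $\go_1$.

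The only mild difficulty is keeping the constant $c$ independent of $\gamma\in(0,1)$; this is why the constraint on $\gl$ is stated in terms of $(1-\gamma)/\gamma$ rather than just a small absolute bound. Apart from this bookkeeping, the argument is a routine application of the exponential tilting technique, and no additional input is needed beyond finiteness of $\M$ on a neighborhood of $0$.
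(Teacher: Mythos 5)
Your proof is correct and follows essentially the same route as the paper: change of measure, H\"older with exponents $1/\gamma$ and $1/(1-\gamma)$, explicit MGF computation, and a quadratic bound on $\log\M$. The only cosmetic difference is that the paper packages the last step directly as $0\le\log\M(x)\le c x^2$ for $|x|\le 1$ (with $c$ half the maximum of $(\log\M)''$ on $[-1,1]$, valid since $\log\M(0)=(\log\M)'(0)=0$), which is a tidier way of handling the remainder than your ``$\frac12 t^2 + O(t^3)$'' phrasing, but the content is the same.
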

\medskip

\noindent
{\it Proof.}
 We have
\begin{equation}
\label{eq:Holder}
\begin{split}
\bbE \left[ \left(Z_{N, \go}\right)^\gamma 
\right]\, &=\, 
\bbE_{N , \gl} \left[ \left(Z_{N, \go}\right)^\gamma 
\frac{\dd \bbP}{\dd \bbP_{N , \gl}}(\go)
\right]
\\
&\le \,
 \left[\bbE_{N , \gl} \left(Z_{N, \go}\right)
\right]^\gamma
\left(
\bbE_{N,\gl}\left[ \left(\frac{\dd \bbP}{\dd \bbP_{N , \gl}}
(\go)\right)^{1/(1-\gamma)}\right]\right)^{1-\gamma}
\\
&=\, \left[\bbE_{N , \gl} \left(Z_{N, \go}\right)
\right]^\gamma
\left( \M (-\gl)^\gga \M\left( \gl \gga  /(1-\gamma)\right)^{1-\gamma}\right)^N, 
\end{split}
\end{equation}
where in the second step we have used 
H\"older inequality and the last step is a direct computation.
The proof is complete once we observe that
$0 \le \log \M (x) \le  c x^2$ for $\vert x\vert \le 1$
if $c$ is the maximum of the   second derivative of $(1/2)\log \M (\cdot)$
over $[-1, 1]$.

\qed

\subsection{Estimates on the renewal process}
\label{sec:Arenew}

With the notation \eqref{eq:Zpure} one has
\medskip

\begin{proposition}\label{th:homog} Let $\alpha\in(0,1)$ and 
 $r(\cdot)$ be a function diverging at infinity and such that
\begin{equation}
\label{eq:condr}
\lim_{N\to\infty}\frac{r(N)L(N)}{N^{\ga}}=0.
\end{equation}
For the homogeneous pinning model,
\begin{equation}
Z_{N}(-N^{-\ga}L(N)r(N))\stackrel{N\to\infty}\sim
\frac{N^{\ga-1}}{L(N)\,r(N)^2}.
\end{equation}
\end{proposition}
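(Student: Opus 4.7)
The plan is to read $Z_N(h)$ with $h<0$ as the mass renewal function at $N$ of a terminating renewal whose sub-probability inter-arrival law is $K_h(n):=e^hK(n)$. Decomposing \eqref{eq:Z+} with $\gb=0$ according to the number $\ell$ of renewal points in $\{1,\ldots,N\}$ yields
\[
 Z_N(h)\, =\, \sum_{\ell\geq 1} e^{h\ell}K^{*\ell}(N).
\]
For each \emph{fixed} $h<0$, Theorem~A.4 in \cite{cf:GB} (the same terminating-renewal result already invoked to obtain \eqref{eq:if2}) gives $Z_N(h)\sim e^hK(N)/(1-e^h)^2\sim K(N)/h^2$. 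A direct computation gives $K(N)/h_N^2=N^{\ga-1}/(L(N)r(N)^2)$ with $h_N:=-N^{-\ga}L(N)r(N)$, so the whole problem reduces to making that equivalence uniform along the sequence $h_N\to 0^-$.

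The two key inputs from the theory of regularly varying / subexponential laws are (i) the \emph{uniform upper bound} $K^{*\ell}(N)\le C\,\ell\,K(N)$, valid for all $\ell\ge 1$ and $N$ large; and (ii) the \emph{uniform one-big-jump asymptotic} $K^{*\ell}(N)=\ell K(N)(1+o(1))$ as $N\to\infty$, uniformly for $\ell\le o(m(N))$, where $m(N)\asymp N^{\ga}/L(N)$ (modulo slow variation) is the stable scaling index (number of steps for $\tau$ to typically reach $N$). The hypothesis $r(N)\to\infty$ enters precisely here: it places the effective summation scale $\ell^{**}_N:=1/|h_N|=m(N)/r(N)$ strictly below $m(N)$, so the one-big-jump regime covers all $\ell$ contributing to the bulk of the sum. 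I would pick an intermediate $\ell^*_N$ with $\ell^*_N|h_N|\to\infty$ and $\ell^*_N/m(N)\to 0$ (e.g.\ $\ell^*_N:=m(N)/\sqrt{r(N)}$, which makes $R_N:=\ell^*_N|h_N|=\sqrt{r(N)}\to\infty$), and split
\[
 Z_N(h_N)\, =\, \Bigl(\sum_{\ell=1}^{\ell^*_N}+\sum_{\ell>\ell^*_N}\Bigr)e^{h_N\ell}K^{*\ell}(N).
\]

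On the first range the uniform one-big-jump asymptotic gives $\sum_{\ell\le\ell^*_N}e^{h_N\ell}K^{*\ell}(N)=(1+o(1))K(N)\sum_{\ell\ge 1}\ell e^{h_N\ell}=(1+o(1))K(N)/h_N^2$, where I use the identity $\sum_{\ell\ge 1}\ell e^{h\ell}=e^h/(1-e^h)^2\sim 1/h^2$ together with the routine check that truncation at $\ell^*_N\gg\ell^{**}_N$ costs only a factor $1+o(1)$. On the second range the uniform upper bound (i), combined with the elementary $\int_R^\infty ye^{-y}\,dy\sim Re^{-R}$, gives
\[
 \sum_{\ell>\ell^*_N}e^{h_N\ell}K^{*\ell}(N)\, \le\, C\,K(N)\sum_{\ell>\ell^*_N}\ell e^{h_N\ell}\, \sim\, C\,\frac{K(N)}{h_N^2}\,R_N e^{-R_N}\, =\, o\!\left(\frac{K(N)}{h_N^2}\right).
\]
Assembling the two pieces proves the claim. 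The main technical obstacle is thus the uniform control (i)--(ii), in particular the one-big-jump asymptotic on the $N$-dependent range $\ell\le o(m(N))$ (as opposed to its classical fixed-$\ell$ version); both are standard for regularly varying laws of index $\ga\in(0,1)$, and this is precisely where the restriction on $\ga$ is essential, since the boundary case $\ga=1$ would require accommodating logarithmic corrections to the stable scaling.
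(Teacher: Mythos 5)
Your decomposition $Z_N(h)=\sum_{\ell\ge1}e^{h\ell}K^{*\ell}(N)$ and your truncation scale $\ell^*_N\asymp m(N)/\sqrt{r(N)}$ match the paper exactly (the paper writes $\bP(\tau_\ell=N)$ for $K^{*\ell}(N)$ and splits at $n=v(N)/\sqrt{r(N)}$), and on the first range both proofs invoke the same uniform local limit theorem of Doney (your item (ii) is precisely Proposition~\ref{th:Don}). The only real divergence is the tail $\ell>\ell^*_N$: you invoke a uniform bound $K^{*\ell}(N)\le C\ell K(N)$ for \emph{all} $\ell$, a claim that is not obviously ``standard'' in the form stated and that you do not prove (for $\ell$ comparable to or larger than $m(N)$ one is outside the one-big-jump regime and one must appeal to the stable local limit theorem and lower-deviation bounds to check it). The paper avoids this entirely with a cheaper device: since each term $e^{h_N\ell}$ with $\ell>\ell^*_N$ is at most $e^{-\sqrt{r(N)}}$, the tail is bounded by
\begin{equation}
\sum_{\ell>\ell^*_N}e^{h_N\ell}K^{*\ell}(N)\;\le\;e^{-\sqrt{r(N)}}\sum_{\ell\ge1}K^{*\ell}(N)\;=\;e^{-\sqrt{r(N)}}\,\bP(N\in\tau),
\end{equation}
which by \eqref{eq:DoneyB} is $o\bigl(N^{\ga-1}/(L(N)r(N)^2)\bigr)$ at once, with no control of individual $K^{*\ell}(N)$ beyond the identity $\sum_\ell K^{*\ell}(N)=\bP(N\in\tau)$. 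You should substitute this estimate for your (i); with that replacement the argument is complete and coincides with the paper's.
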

\medskip

To prove this result we use:
\medskip

\begin{proposition}(\cite[Theorems A \& B]{cf:Doney})\label{th:Don}
  Let $\ga\in(0,1)$.  There exists a function $\gs(\cdot)$ satisfying
\begin{equation}
\lim_{x\rightarrow+\infty} \gs(x)\, =\, 0 ,
\end{equation}
and such that 
for all $n,N\in\N$
\begin{equation}
\label{eq:fD}
\left|\frac{\bP(\tau_n=N)}{n K(N)}-1\right|\le \gs\left(\frac{N}{a(n)}\right),
\end{equation}
where $a(\cdot)$ is an asymptotic inverse of $x \mapsto x^\ga /L(x)$.

Moreover, 
\begin{equation}
  \label{eq:DoneyB}
  \bP(N\in\tau)\stackrel{N\to\infty} \sim 
  \left(
  \frac{\ga \sin(\pi \ga)}{\pi}\right)
   \frac{N^{\ga-1}}{L(N)}.
\end{equation}
\end{proposition}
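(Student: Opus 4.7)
The proof splits naturally into the local limit theorem \eqref{eq:fD} and the renewal asymptotics \eqref{eq:DoneyB}, which I would prove in this order.

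For \eqref{eq:fD} my plan is to use the \emph{one-big-jump principle} for sums of heavy-tailed variables. Writing $\tau_n=\xi_1+\ldots+\xi_n$ with $\xi_i$ i.i.d.\ of law $K$, and setting $M_n:=\max_i\xi_i$, the strategy is to split the event $\{\tau_n=N\}$ according to whether $M_n\ge(1-\gep)N$ or not. For fixed $\gep>0$, on the event $\{M_n<(1-\gep)N\}$ the sum $\tau_n$ is a sum of truncated heavy-tailed variables, and a Fuk--Nagaev type inequality yields $\bP(\tau_n=N,\,M_n<(1-\gep)N)=o(nK(N))$ uniformly when $N/a(n)\to\infty$ (essentially because once the summands are truncated at $(1-\gep)N$ they have finite moments of every order, so the sum concentrates well below $N$). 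The complementary event factorises via symmetry: up to an error of order $\binom n 2 K(N/2)^2=o(nK(N))$ coming from the possibility of two big jumps, one has
\[
\bP(\tau_n=N) \;=\; n\sum_{S\le\gep N}\bP(\tau_{n-1}=S)\,K(N-S)\,(1+o(1)),
\]
and the right-hand side equals $nK(N)(1+o(1))$ by regular variation of $K$ combined with $\bP(\tau_{n-1}\le\gep N)\to 1$, itself a consequence of $a(n-1)/N\to 0$ and a bootstrap of the same big-jump estimate.

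For \eqref{eq:DoneyB} I would use $U(N):=\bP(N\in\tau)=\sum_{n\ge1}\bP(\tau_n=N)$ and split the sum around the scale $n_*=n_*(N)$ defined by $a(n_*)\asymp N$, so $n_*\asymp N^\ga/L(N)$. For $n\ll n_*$, part \eqref{eq:fD} gives $\bP(\tau_n=N)\sim nK(N)$, and these terms contribute of order $K(N)n_*^2$. For $n\gtrsim n_*$ the classical stable local limit theorem provides $\bP(\tau_n=N)\sim a(n)^{-1}g_\ga(N/a(n))$, where $g_\ga$ is the density of the positive $\ga$-stable law arising as the scaling limit of $\tau_n/a(n)$; these terms approximate a Riemann sum that, after the change of variables $u=N/a(n)$ and using $a(n)^\ga\sim nL(a(n))$, converges to an explicit multiple of $N^{\ga-1}/L(N)$ times $\int_0^\infty u^{\ga-1}g_\ga(u)\dd u$. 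The identity $\int_0^\infty u^{\ga-1}g_\ga(u)\dd u=1/\Gamma(1-\ga)$ (via the Laplace transform $\bbE e^{-\gl S_\ga}=e^{-\Gamma(1-\ga)\gl^\ga}$ and Fubini) together with the reflection formula $\Gamma(\ga)\Gamma(1-\ga)=\pi/\sin(\pi\ga)$ produces the stated constant $\ga\sin(\pi\ga)/\pi$.

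The main obstacle is the matching of the three regimes (large deviations, stable window, and tail $n\gg n_*$) into a single uniform estimate, together with the extraction of the sharp constant: this requires a quantitative stable LLT with explicit control of the error in the slowly varying function $L$, and a careful dominated-convergence step to exchange the limit $N\to\infty$ with the summation over $n$. A potentially cleaner alternative is to work on the Laplace-transform side, where $\sum_N e^{-\gl N}U(N)=1/(1-\hat K(\gl))$ and $1-\hat K(\gl)\sim\Gamma(1-\ga)\gl^\ga L(1/\gl)$ as $\gl\downarrow 0$, but converting this Abelian/Tauberian input into a \emph{local} statement on $U(N)$ itself is not routine because $U(N)$ is not monotone; doing so essentially requires \eqref{eq:fD} as an input, so the two parts of the proposition are genuinely coupled.
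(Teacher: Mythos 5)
This proposition is not proved in the paper at all: it is imported verbatim from Doney (Theorems A and B of \cite{cf:Doney}), with \eqref{eq:DoneyB} going back to Garsia--Lamperti for $\ga\in(1/2,1)$. So there is no internal argument to compare yours against; what can be judged is whether your sketch would reconstruct Doney's theorems. Your architecture is indeed the standard one: a one-big-jump analysis for the local large deviation \eqref{eq:fD}, and for \eqref{eq:DoneyB} a splitting of $\sum_n\bP(\tau_n=N)$ at the scale $n_*\asymp N^{\ga}/L(N)$, with \eqref{eq:fD} controlling $n\ll n_*$ and the stable local limit theorem the window $n\asymp n_*$. You are also right that the two parts are genuinely coupled: for $\ga\le1/2$ the strong renewal theorem can fail under a mere tail assumption, and it is precisely the uniform estimate \eqref{eq:fD} --- available because \eqref{eq:K} is a \emph{local} hypothesis on $K$ --- that tames the range $n\ll n_*$, whose contribution $\sum_{n\le\delta n_*}nK(N)\sim(\delta^2/2)N^{\ga-1}/L(N)$ is of the critical order but vanishes as $\delta\to0$.

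The genuine gap is the central step of \eqref{eq:fD}: the claim that a Fuk--Nagaev inequality yields $\bP(\tau_n=N,\,M_n<(1-\gep)N)=o(nK(N))$ uniformly for $N/a(n)\to\infty$. Two obstructions. First, Fuk--Nagaev controls tails, not point masses: in this regime $\bP(\tau_n\ge N)\asymp nN^{-\ga}L(N)$, a factor $N$ larger than the target $nK(N)\asymp nN^{-1-\ga}L(N)$, so bounding the atom by the tail is fatal and a local ingredient (a concentration-function bound $\sup_m\bP(\tau_k=m)\le C/a(k)$, suitably combined with the tail estimate) must be injected. Second, even for the tail version the gain from truncating at level $\gep N$ is of order $\bigl(nN^{-\ga}L(N)\bigr)^{c/\gep}\approx(a(n)/N)^{c'/\gep}$, which beats the needed extra factor $1/N$ only when $N\ge a(n)\,n^{\delta}$; when $N/a(n)$ diverges slowly the bound is useless, and this is exactly the regime where Doney's proof resorts to a bootstrap on $n$ rather than a single exponential inequality. (You would also need to handle the intermediate range $M_n\in[\gep N,(1-\gep)N]$, which your dichotomy skips, and to justify the $n\gg n_*$ tail of the renewal sum via $\bP(\tau_n\le N)\le\exp(-cn/n_*)$ times the concentration bound.) A smaller slip: the change of variables $u=N/a(n)$ in the stable window produces $\ga\,(N^{\ga-1}/L(N))\int_0^\infty u^{-\ga}g_\ga(u)\,\dd u$, i.e.\ the negative moment $\bbE[S_\ga^{-\ga}]$, not $\int_0^\infty u^{\ga-1}g_\ga(u)\,\dd u$; with the Laplace exponent $\ga^{-1}\Gamma(1-\ga)\gl^{\ga}$ dictated by the normalization of $a(\cdot)$ one gets $\bbE[S_\ga^{-\ga}]=1/(\Gamma(\ga)\Gamma(1-\ga))=\sin(\pi\ga)/\pi$, which is where the reflection formula actually enters.
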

\medskip

We observe that by \cite[Th.~1.5.12]{cf:RegVar}
we have that $a(\cdot)$ is regularly varying of exponent 
$1/\ga$, in particular $\lim_{n \to \infty}a(n)/n^b=0$ if $b >1/\ga$. 
We point out also that \eqref{eq:DoneyB}
has been first established for $\ga \in (1/2,1)$
in \cite{cf:GarsiaLamperti}.

\smallskip

\begin{proof}[Proof of Proposition \ref{th:homog}]
    We put for simplicity of notation $v(N):=N^{\ga}/L(N)$.
  Decomposing $Z_{N }$ with respect to the cardinality of $\tau\cap
  \{1,\ldots,N\}$,
\begin{equation}
\label{eq:lastline}
\begin{split}
  Z_{N}(-r(N)/v(N))\, &=\,\sum_{n=1}^{N}
  \bP\left(|\tau\cap\{1,\ldots,N\}|=n,N\in
  \tau\right)e^{-n\,r(N)/v(N)}\\
  &=\, \sum_{n=1}^{N}\bP(\tau_n=N)e^{-n\,r(N)/v(N)}\\
&=\, 
  \sum_{n=1}^{\frac{v(N)}{\sqrt{r(N)}}}\bP(\tau_n=N)e^{-n\,\frac{r(N)}{v(N)}}+
  \sum_{n=\frac{v(N)}{\sqrt{r(N)}}+1}^N\bP(\tau_n=N)e^{-n\,\frac{r(N)}{v(N)}}.
  \end{split}
\end{equation}
Observe now that one can rewrite  the first term in  the last line of \eqref{eq:lastline}  as
\begin{equation}
  (1+o(1))K(N) \sum_{n=1}^{v(N)/\sqrt{r(N)}}n \,e^{-n\,r(N)/v(N)},
\end{equation}
and $o(1)$ is a quantity which vanishes for $N\to\infty$
(this follows from Proposition
\ref{th:Don}, which applies uniformly over all terms of the sum in view
of $\lim_N r(N)=\infty$).  Thanks to
condition \eqref{eq:condr}, one can estimate this sum by an integral:
\begin{align*}
  \sum_{n=1}^{v(N)/\sqrt{r(N)}}n\,
  e^{-n\,r(N)/v(N)}=\frac{v(N)^2}{r(N)^2} (1+o(1))\int_{0}^\infty\dd x\,
  x\,e^{-x}= \frac{v(N)^2}{r(N)^2}(1+o(1)).
\end{align*}
As for the second sum in \eqref{eq:lastline}, observing that
$\sum_{n\in\N}\bP(\tau_n=N)=\bP(N\in\tau)$, we can bound it above by
\begin{equation}
\bP(N\in\tau)e^{-\sqrt{r(N)}}.
\end{equation}
In view of  \eqref{eq:DoneyB}, the last term is
negligible with respect to $N^{\ga-1}/(L(N)\,r(N)^2)$
and our result is proved.
\end{proof}

\noindent
{\it Proof of Lemma \ref{th:lemmaU2}.}
Recalling the notation
\eqref{eq:Zpure},  point (2) of Theorem \ref{th:pure} (see in 
particular the definition of $\hat L(\cdot)$) and  
\eqref{eq:DoneyB},
we see that the result we are looking for follows if we can show that
for every $c>0$ there exists $C_{23}=C_{23}(c)>0$ such that
\begin{equation}
\bE \left[e^{c|\tau\cap\{1,\ldots,N\}|L(N)/N^\ga}\Big \vert \, N\in\tau
\right]\, \le C_{23},
\end{equation}
uniformly in $N$.
Let us assume that $N/4\in \N$; by Cauchy-Schwarz inequality the
result follows if we can show that
\begin{equation}
\label{eq:withc}
\bE \left[e^{2c|\tau\cap\{1,\ldots,N/2\}|L(N)/N^\ga}\Big \vert \, N\in\tau
\right]\, \le C_{24}.
\end{equation}
Let us define $X_N :=\max\{n=0,1, \ldots, N/2:\, n \in \tau\}$
(last renewal epoch up to $N/2$).
By the renewal property  we have 
\begin{multline}
\bE \left[e^{2c|\tau\cap\{1,\ldots,N/2\}|L(N)/N^\ga}\Big \vert \, N\in\tau
\right]\\ 
\, =\, \sum_{n=0}^{N/2} 
\bE \left[e^{2c|\tau\cap\{1,\ldots,N/2\}|L(N)/N^\ga}\Big \vert \,X_N=n
\right] \bP\left( X_N=n \big \vert \, N \in \tau\right).
\end{multline}
If we can show that for every $n =0, 1, \ldots, N/2$
\begin{equation}
\bP\left( X_N=n \big \vert \, N \in \tau\right) \le C_{25} 
\bP\left( X_N=n\right),
\end{equation}
then we are reduced to proving \eqref{eq:withc} with
$\bE[\cdot \vert N \in \tau]$ replaced by $\bE[\cdot]$.

Let us then observe that
\begin{equation}
\label{eq:justbefore}
\begin{split}
\bP\left( X_N=n ,  \, N \in \tau\right)\, &=\,
\bP(n \in \tau) \bP\left( \tau_1> (N/2) -n, \, N-n \in \tau\right)
\\ 
&=\,\bP(n \in \tau)
\sum_{j= (N/2)-n+1}^{N-n} \bP( \tau_1=j)  \bP \left( N-n-j \in \tau\right).
\end{split}
\end{equation}
We are done if we can show that
\begin{equation}
\label{eq:tobesplit}
\sum_{j= (N/2)-n+1}^{N-n} \bP( \tau_1=j)  \bP \left( N-n-j \in \tau\right)
\,  \le \, 
C_{26}
\bP \left( N \in \tau\right)
\sum_{j= (N/2)-n+1}^{\infty} \bP( \tau_1=j),
\end{equation}
because the mass renewal function $\bP(N\in\tau)$ cancels when we consider the
conditioned probability and, recovering $\bP(n \in \tau)$ from
\eqref{eq:justbefore} we rebuild $\bP(X_N=n)$.  We split the sum in
the left-hand side of \eqref{eq:tobesplit} in two terms.  By using
\eqref{eq:DoneyB} (but just as upper bound) and the fact that the
inter-arrival distribution is regularly varying we obtain
\begin{multline}
\label{eq:term2}
\sum_{j= (3N/4)-n+1}^{N-n} \bP( \tau_1=j)  \bP \left( N-n-j \in \tau\right)
\\
\,  \le \, C_{27} \frac{L(N)}{N ^{1+\ga }} 
\sum_{j= (3N/4)-n+1}^{N-n} \frac1{(N-n-j+1)^{1-\ga }L(N-n-j+1)}
\\ =\, 
C_{27} \frac{L(N)}{N ^{1+\ga }} 
\sum_{j= 1}^{N/4}
\frac1{j^{1-\ga }L(j)}\, \le \, \frac{C_{28}}N. 
\end{multline}
Since the right-hand side of \eqref{eq:tobesplit} is bounded below by
$1/N$ times a suitable constant (of course if $n$ is 
close to $N/2$ this quantity is sensibly larger) this first term of the
splitting is under control.  Now the other term: since the renewal
function is regularly varying
\begin{equation}
\label{eq:term1}
\sum_{j= (N/2)-n+1}^{(3N/4)-n} \bP( \tau_1=j) \bP \left( N-n-j \in
\tau\right) \, \le \, C_{29} \bP \left( N \in \tau\right)\sum_{j=
(N/2)-n+1}^{(3N/4)-n} \bP( \tau_1=j),
\end{equation}
that gives what we wanted.

It remains to show that \eqref{eq:withc} holds without conditioning.
For this we use the asymptotic estimate $-\log \bE[\exp(- \gl \tau_1)]
\stackrel{\gl \searrow 0}\sim c_\ga \gl ^\ga L(1/\gl)$, with $c_\ga =
\int_0^\infty r^{-1-\ga} (1-\exp(-r)) \dd r=\Gamma(1-\ga)/\ga$, and
the Markov inequality to get that if $x>0$
\begin{equation}
\label{eq:withoutc}
\bP\left( 
|\tau\cap\{1,\ldots,N\}|L(N)/N^\ga >x
\right)
\, =\, 
\bP\left( \tau_n <N \right) \, \le \, 
\exp \left( - \frac 12 c_\ga \gl^\ga L(1/\gl) n  + \gl N\right),
\end{equation}
with $n$  the integer part of $x N^\ga /L(N)$ and 
$\gl \in (0, \gl_0)$ for some $\gl_0>0$.  
If one chooses  $\gl = y/N$, $y$ a positive number, then for 
$x\ge 1$ and $N$ sufficiently large (depending on $\gl_0$ and $y$)
we have that the quantity at the exponent in the right-most term in
\eqref{eq:withoutc} is bounded above by
$ -(c_\ga/3) y^\ga x  +y$. 
The proof is then complete if we select $y$ such that
$(c_\ga/3) y^\ga> 2c$ ($c $ appears in \eqref{eq:withc}) since
if $X$ is a non-negative random variable and $q$ is a real number
$\bE[\exp(qX)] =1+q\int_0^\infty e^{qx} \bP(X>x) \dd x$.

 \subsection{Some basic facts about slowly varying functions}

\label{sec:Asv}

We recall here some of the elementary properties 
of slowly varying functions which
we repeatedly use, and we refer to \cite{cf:RegVar} for a complete
treatment of slow variation.

The first two well-known facts are that, if $U(\cdot)$ is slowly
varying at infinity,
\begin{equation}
\label{eq:sommasv}
  \sum_{n\ge N}\frac{U(n)}{n^m}\stackrel{N\to\infty}\sim U(N)\frac{N^{1-m}}
{m-1},
\end{equation}
if $m>1$ and 
\begin{equation}
  \label{eq:sommasv2}
  \sum_{n=1}^N \frac{U(n)}{n^m}\stackrel{N\to\infty}\sim U(N)\frac{N^{1-m}}{1-m},
\end{equation}
if $m<1$ ({\sl cf.} for instance \cite[Sec. A.4]{cf:GB}). 
 The second two facts are that
({\sl cf.} \cite[Th. 1.5.3]{cf:RegVar})
\begin{equation}
  \label{eq:minsv} 
\inf_{n\ge N} U(n) n^m\stackrel{N\to\infty}\sim U(N)\,N^m,
\end{equation}
if $m>0$, and
\begin{equation}
  \label{eq:maxsv}
\sup_{n\ge N} U(n) n^m\stackrel{N\to\infty}\sim U(N)\,N^m, 
\end{equation}
if $m<0$.

\qed

\section*{acknowledgments} 

G.G. and F.L.T.  acknowledge the support of the ANR grant {\sl POLINTBIO}.
B.D. and F.L.T. acknowledge the support of the ANR grant {\sl LHMSHE}.

\bigskip


\noindent
{\bf Note added in proof.}
After this work appeared in preprint form (arXiv:0712.2515 [math.PR]),
several new results have been proven. In \cite{cf:AZ} it has been shown
in particular that when $L(\cdot)$ is trivial, then $\gep$ in
Theorem~\ref{th:a121} can be chosen equal to zero, with $a(0)>0$. The
case $\ga=1$ is also treated in \cite{cf:AZ}.  The fractional moment
method we have developed here may be adapted to deal with the $\ga=1$
case too: this has been done in \cite{cf:BS}, where a related model is
treated. Finally, the controversy concerning the case $\alpha=1/2$
and $L(\cdot)$ asymptotically constant has been solved in \cite{cf:GLT08},
where it was shown that $h_c(\gb)>h_c^{ann}(\gb)$ for every $\beta>0$.


\begin{thebibliography}{99}


\bibitem{cf:AM}
M. Aizenman and S. Molchanov, \emph{Localization at large disorder and at extreme energies: an elementary
derivation}, Commun. Math. Phys. {\bf 157} (1993), 245--278.

\bibitem{cf:ASFH}
M. Aizenman, J. H. Schenker, R. M.  Friedrich and
D. Hundertmark, \emph{Finite-volume fractional-moment criteria for Anderson localization}, Commun. Math.
 Phys. {\bf 224} (2001),  219-253.

\bibitem{cf:Ken} K.~S.~Alexander, \emph{The effect of disorder on
    polymer depinning transitions}, Commun. Math. Phys. {\bf 279} (2008),  117-146.
      
  \bibitem{cf:AS} K. S. Alexander and V. Sidoravicius, \emph{Pinning of polymers and interfaces by random potentials}, Ann. Appl. Probab. {\bf 16} (2006), 636-669.


 \bibitem{cf:AZ} K.~S.~Alexander and N.~Zygouras,
\emph{Quenched and annealed critical points in polymer pinning models},
arXiv:0805.1708 [math.PR]


\bibitem{cf:RegVar} N. H. Bingham, C. M. Goldie and J. L. Teugels,
  \emph{Regular variation}, Cambridge University Press, Cambridge,
  1987.

\bibitem {cf:BS} 
M.~Birkner and  R.~Sun,
\emph{Annealed vs quenched critical points for a random walk pinning model}, 
	arXiv:0807.2752 [math.PR]


\bibitem{cf:BCT} E. Bolthausen, F.  Caravenna and  B. de Tili\`ere, \emph{
The quenched critical point of a diluted disordered polymer model},
Stochastic Process. Appl. (to appear), 
arXiv:0711.0141 [math.PR]


\bibitem{cf:BPP} E. Buffet, A. Patrick and J. V.  Pul\'e,
\emph{Directed polymers on trees: a martingale approach}, J. Phys. A
Math. Gen. {\bf 26} (1993), 1823--1834.

\bibitem{cf:CCFS} J. T. Chayes, L. Chayes, D. S. Fisher and 
T. Spencer, \emph{Finite-size scaling and correlation lengths for
disordered systems}, Phys. Rev. Lett. {\bf 57} (1986), 2999--3002.

\bibitem{cf:coluzzi} B. Coluzzi and E. Yeramian, \emph{Numerical
evidence for relevance of disorder in a Poland-Scheraga DNA
denaturation model with self-avoidance: Scaling behavior of average
quantities}, Eur. Phys. Journal B {\bf 56} (2007), 349-365.

\bibitem{cf:DHV} B. Derrida, V. Hakim and J. Vannimenus,
  \emph{Effect of disorder on two-dimensional wetting}, J. Statist.
  Phys. {\bf 66} (1992), 1189--1213.

\bibitem{cf:Doney} R. A. Doney, \emph{One-sided local large deviation and 
renewal theorems in the case of infinite mean}, Probab. Theory Rel. Fields
{\bf 107} (1997), 451-465.

\bibitem{cf:vD} H.
von Dreifus, \emph{Bounds on the critical exponents of disordered ferromagnetic 
models}, Ann. Inst. H. Poincar\'e Phys. Th\'eor. {\bf 55} (1991), 657-669.


\bibitem{cf:ED}
M. R. Evans and  B. Derrida, \emph{Improved bounds for the transition temperature of directed polymers in a
finite-dimensional random medium}, J. Statist. Phys. {\bf 69} (1992), 427--437.

\bibitem{cf:Fisher}
M.~E.~Fisher, \emph{ Walks, walls, wetting, and melting},
J. Statist. Phys. {\bf 34} (1984), 667--729.

\bibitem{cf:FLNO} G. Forgacs, J. M. Luck, Th. M. Nieuwenhuizen and
H. Orland, \emph{Wetting of a disordered substrate: exact critical
behavior in two dimensions}, Phys. Rev. Lett. {\bf 57} (1986),
2184--2187.

\bibitem{cf:GN} D. M. Gangardt and S. K. Nechaev, \emph{Wetting
	transition on a one-dimensional disorder}, 
	 J. Statist. Phys. {\bf 130} (2008),  483-502. 

\bibitem{cf:GarsiaLamperti} A. Garsia and J. Lamperti, \emph{
A discrete renewal theorem with infinite mean},
Comment. Math. Helv. {\bf 37} (1963), 221--234.

\bibitem{cf:GB} G. Giacomin, \emph{ Random Polymer Models},
Imperial College Press, World Scientific (2007).

\bibitem{cf:GLT} G. Giacomin, H. Lacoin and F. L. Toninelli, \emph{
    Hierarchical pinning models, quadratic maps and quenched
    disorder}, arXiv:0711.4649 [math.PR]

\bibitem{cf:GLT08} G. Giacomin, H. Lacoin and F. L. Toninelli, \emph{
Marginal relevance of disorder for pinning models},  arXiv:0811.0723 [math-ph]

\bibitem{cf:GTdeloc} G.~Giacomin and F.~L.~Toninelli, \emph{Estimates on
    path delocalization for copolymers at selective interfaces},
  Probab. Theor. Rel. Fields {\bf 133} (2005), 464--482.

\bibitem{cf:GT_cmp} G.~Giacomin and  F.~L.~Toninelli, \emph{
Smoothing effect of quenched disorder on polymer depinning transitions},
Commun. Math. Phys. {\bf 266} (2006), 1--16.

\bibitem{cf:GT_irrel} G. Giacomin and F.~L.~Toninelli, \emph{ On the
    irrelevant disorder regime of pinning models}, preprint (2007).
    arXiv:0707.3340 [math.PR]

  \bibitem{cf:Harris} A.~B.~Harris, \emph{ Effect of Random Defects on
      the Critical Behaviour of Ising Models}, J. Phys. C {\bf 7}
    (1974), 1671--1692.
 
\bibitem{cf:dna} Y. Kafri, D. Mukamel and L. Peliti, \emph{ Why is the
DNA denaturation transition first order?}, Phys. Rev.
Lett. {\bf 85}
(2000), 4988--4991.
				   
 
\bibitem{cf:T_cmp}  F.~L.~Toninelli,
\emph{A replica-coupling approach to disordered pinning models},
 Commun. Math. Phys. {\bf 280} (2008),  389-401.

\bibitem{cf:T_AAP} F. L. Toninelli, \emph{Disordered pinning models and
    copolymers: beyond annealed bounds},
    Ann. Appl. Probab. {\bf 18} (2008),  1569-1587.
    
\end{thebibliography}
\end{document}